\newtheorem{theorem}{Theorem}[section]
\newtheorem{lemma}[theorem]{Lemma}
\newtheorem{corollary}[theorem]{Corollary}
\theoremstyle{definition}
\newtheorem{definition}[theorem]{Definition}
\newtheorem{example}[theorem]{Example}
\theoremstyle{remark}
\newtheorem{remark}[theorem]{Remark}
\numberwithin{equation}{section}
\newif\ifpaper
\newif\ifpapertwo
\newcommand{\R}{\mathbb{R}}
\newcommand{\ps}[2]{\langle #1 , #2 \rangle}
\newcommand{\norm}[1]{\| #1 \|}
\newcommand{\M}{\mathcal{M}}
\def\epsilon{\varepsilon}
\newcommand{\diam}{\operatorname{diam}}
\newcommand{\Co}{\mathcal{C}} 
\newcommand{\V}{\mathcal{V}}  
 \newcommand{\So}{\mathcal{S}}
\renewcommand{\mathring}[1]{#1^{\circ}}
\newcommand{\kernel}{K}
\newcommand{\cone}{\mathit \Gamma}
\DeclareSymbolFont{fouriersymbols}{FMS}{futm}{m}{n}
\DeclareSymbolFont{fourierlargesymbols}{FMX}{futm}{m}{n}
\DeclareMathDelimiter{\VERT}{\mathord}{fouriersymbols}{152}{fourierlargesymbols}{147}
\begin{document}
\title[The contractivity of multilinear maps]{The contractivity of cone--preserving multilinear~mappings}

\author{Antoine Gautier}
\address{Department Mathematics and Computer Science, Saarland University, Saarbr\"ucken, Germany}
\email{antoine.gautier@uni-saarland.de}
\thanks{The work of A.G. has been supported by the ERC starting grant ``NOLEPRO'' No 307793.}

\author{Francesco Tudisco}
\address{School of Mathematics, University of Edinburgh, UK}
\email{f.tudisco@strath.ac.uk}
\thanks{The work of F.T. was funded by  the European Union's Horizon 2020 research and
innovation programme under the Marie Sk\l odowska-Curie individual fellowship ``MAGNET'' No 744014.}

\subjclass[2010]{Primary %
47H07, 
47H09, 
47H10; 
Secondary %
15B48, 
47J10 
}
\keywords{Perron-Frobenius theory, Hilbert metric, Birkhoff-Hopf theorem, nonlinear eigenvalues, multilinear maps, cone-preserving maps}

\date{\today}



\begin{abstract}
With the notion of mode-$j$ Birkhoff contraction ratio, we prove a multilinear version of the Birkhoff-Hopf and the Perron-Fronenius theorems, which   provide conditions on the existence and uniqueness of a solution to a large family of systems of nonlinear  equations of the type $f_i(x_1,\dots,x_\nu)= \lambda_i x_i$, being   $x_i$ and element of a cone $C_i$ in a Banach space $V_i$. We then consider a family of nonlinear integral operators $f_i$ with positive kernel, acting on product of spaces of continuous real valued functions. In this setting we provide an explicit formula for the mode-$j$ contraction ratio which is particularly relevant in practice as this type of operators play a central role in numerous models and applications.
\end{abstract}

\maketitle

\section{Introduction}
In this work we use the term ``Birkhoff-Hopf Theorem'' to refer to a number of results of different authors that overall show that a large class of cone-preserving linear operators behaves as contraction mappings with respect to the Hilbert projective metric. The original versions of this classical result were independently proved by Birkhoff \cite{birkhoff1} and Hopf \cite{hopf1}. Various authors have contributed afterwards to extend, generalize and sharpen the original theorems. A very partial list of contributors include Eveson and Nussmaum \cite{BHNB}, Liverani and Wojtkowski \cite{liverani1994generalization}, Lim \cite{lim2003birkhoff}. For a review, we refer to the monograph by Lemmens and Nussbaum \cite{NB} and, in particular, Appendix A therein. 

The Birkhoff-Hopf theorem and the ideas behind it arise and play an important role in a wide variety of problems. For example,  this result can be used   to prove  so-called  linear  and  nonlinear
 ergodic theorems in population dynamics \cite{cohen1979ergodic}, it has applications in control theory and concerning ordinary  and partial  differential equations, particularly   filtering theory  \cite{budhiraja1998robustness,nussbaum1994finsler}, it is an important  tool in problems concerned  with  rescaling  matrices  or non-negative  integral kernels, so-called $DAD$ theorems and Sinkhorn-Knopp algorithms \cite{borwein1994entropy,brualdi1974dad,knight2008sinkhorn}, it has been used to provide tight lower bounds for the logarithmic Sobolev constant of Markov chains~\cite{pmpaper},  and, in particular, the ideas behind the theorem play a central role in the Perron-Frobenius theory for linear and nonlinear mappings that leave a closed cone invariant \cite{babaei2018stochastic,Gaubert,multiPF,liverani1995decay,NB,rugh2010cones}.

In this work we extend the Birkhoff-Hopf theorem to multilinear mappings acting on finite  products of cones in Banach spaces. This extension allows us to prove a new Perron-Frobenius theorem, based on a Hilbert cone-metric and the concept of Lipschitz matrix. Due to the Banach fixed point theorem, the latter result provides new conditions for the existence and uniqueness of a solution for systems of nonlinear  equations of the type $f_i(x_1,\dots,x_\nu)=\lambda_i x_i$, $i=1,\dots,\nu$, with $x_i$ being an element of the Banach space $V_i$, and $f_i$ being a nonlinear cone-preserving mapping.

The paper is structured as follows: In the next section we recall the most general formulation of the Birkhoff-Hopf theorem we are aware of. In Section \ref{sec:multilinear_BH} we introduce multilinear and weakly multilinear maps and extend the Birkhof-Hopf theorem to these mappings. Then, in Section \ref{sec:multilinear_PF} we use that theorem to prove a new Perron-Frobenius theorem for cone-preserving multilinear maps. As for the linear setting, our new results have a wide range of applications. In particular, in Section \ref{sec:applications} we consider a class of nonlinear integral operators and provide an explicit formula for the contraction ratio of this type of mappings. This formula is particularly useful to provide conditions on the existence, uniqueness and computability of a solution to various systems of functional equations that arise in a number of diverse  contexts. Examples include  the integral equations considered in \cite{Bus73,bushell1986cayley},  the generalized Schr\"{o}dinger equation  discussed in  \cite{ruschendorf1998closedness}, the optimization of Kullback-Leibler divergence in optimal transport \cite{benamou2015iterative}, the matching problem for hypergraphs \cite{nguyen2017efficient}, various eigenvalue and rank-one approximation problems for tensors \cite{friedland2014number,tensorPF}, the optimization of multivariate polynomials \cite{zhou2012nonnegative} and the analysis of central components in complex networks \cite{arrigo2019multi,tudisco2017node}. 

\section{Cone theoretic background}\label{Hilbsection}
We start by recalling few useful and standard definitions and properties of cones. See e.g.\ \cite{BHNB,Nussb} for further details.
Let $V$ be a real vector space and let $C\subseteq V$. We say that $C$ is a (convex and pointed) cone 
if $C$ is closed and convex set such that $\alpha x\in C$ for any $x\in C$ and any $\alpha \geq 0$,  $C\cap (-C)=\{0\}$. 
Any such a cone $C\subseteq V$ induces a partial ordering on $V$ defined as
$$x \preceq_C y \quad \text{if and only if}\quad y-x\in C.$$
If additionally $(V,\norm{\cdot})$ is a  Banach space, and there exists a constant $\gamma>0$ such that $\norm{x}\leq \gamma\norm{y}$ whenever $0\preceq_C x \preceq_C y$, we say that $C$ is a normal cone in $(V,\norm{\cdot})$. 
For a lighter notation, in what follows we write $x\preceq y$ in place of $x\preceq_C y$ when there is no danger of confusion. 

Given $x\in C$ and $y\in V$, we say that $x$ dominates $y$ if there exist $\alpha,\beta \in \R$ such that 
$\alpha x\preceq y \preceq \beta  x$. 
If $x$ dominates $y$ and $x\neq 0$, then one can consider the quantity
\begin{align*}
M(x/y;C) = \inf\{\beta \in \R \colon x \preceq \beta y\}\, .
\end{align*}
For $x,y\in C \setminus\{0\}$, we say that $x$ is comparable to $y$, and write $x\sim_C y$, if $x$ dominates $y$ and $y$ dominates $x$. This defines an equivalence relation on $C$ and its equivalence classes are called the components of $C$. In particular, for $u\in C\setminus\{0\}$, we denote by $C_u$ the equivalence class
\begin{equation}\label{defCu}
	C_u= \{x\in C  \colon x \sim_C u\}.
\end{equation}
Note that $C_u$ is itself a cone and, as $0$ only dominates itself, we  have $C_0=\{0\}$. 

\begin{definition}[Infimum]\label{def:inf}
Let $\Omega\subseteq C$ be a bounded subset of $C$. We say that $u\in \Omega$ is the infimum of $\Omega$, in symbols $u=\inf \Omega$, if $u$ is the largest lower bound of $\Omega$, that is $u\preceq x$ for all $x\in \Omega$ and if $v\preceq x$ for all $x\in \Omega$, then $v\preceq u$.  
\end{definition}

\begin{definition}[Hilbert projective metric] 
Let $x,y\in C$ be such that $x\sim_C y$. The Hilbert projective distance $d_C(x,y)$ between $x$ and $y$ is defined as follows: Either $x=y=0$ and we set $d_C(0,0)=0$, or $x,y\neq 0$ and 
$$d_C(x,y)=\log\Big( M(x/y;C)\, M(y/x;C)\Big).$$
\end{definition}

It is well-known that, for any $u\in C$, $(C_u,d_C)$ is a pseudo-metric space. In particular, for every $x,y\in C_u$, it holds 
\begin{equation*}\label{projprop}
d_C(\alpha\,x,\beta\, y) = d(x,y)\ \text{ for all }\ \alpha,\beta>0.
\end{equation*}
In other words, $d_C$ is a metric on the space of rays in $C_u$. 
There is a rich literature on the completeness of $d_C$, we recall below a result that is particularly useful for us (see \cite{NB}, for example). 
 \begin{lemma}
 \label{NBcomplete}
 Let $C$ be a normal closed cone in a Banach space. For $u \in C \setminus\{0\}$, suppose that $\varphi\colon C_u \to (0,\infty)$ is continuous in the norm topology and homogeneous (i.e.\ $\varphi(\alpha x)=\alpha \varphi(x)$ for all $\alpha\geq 0$), and  let $S_\varphi= \{x\in C_u\colon \varphi(x)=1\}$. Then $(S_\varphi,d_C)$ is a complete metric space.
 \end{lemma}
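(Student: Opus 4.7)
The plan is to first upgrade $d_C$ from a pseudo-metric on $C_u$ to a genuine metric on the slice $S_\varphi$, and then establish completeness by transporting a $d_C$-Cauchy sequence to the norm topology of the ambient Banach space. For the metric property, take $x,y\in S_\varphi$ with $d_C(x,y)=0$; writing $\alpha=M(x/y;C)$ and $\beta=M(y/x;C)$ one has $\alpha\beta=1$. Since $C$ is closed the defining infima are attained, giving $x\preceq\alpha y$ and, after scaling $y\preceq\beta x$ by $\alpha=\beta^{-1}$, also $\alpha y\preceq x$. Pointedness of $C$ then forces $\alpha y-x\in C\cap(-C)=\{0\}$, so $x=\alpha y$, and homogeneity of $\varphi$ combined with $\varphi(x)=\varphi(y)=1$ yields $\alpha=1$, hence $x=y$.

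For completeness, let $(x_n)\subset S_\varphi$ be $d_C$-Cauchy. The first task is to produce a uniform two-sided bracketing $e^{-D}x_1\preceq x_n\preceq e^{D}x_1$; this combines $d_C$-boundedness of the sequence with the fact that $\varphi(x_n)=\varphi(x_1)=1$, the normalization being what prevents the two bracketing constants from drifting independently. Normality of $C$ then yields a uniform norm bound $\|x_n\|\leq \gamma e^{D}\|x_1\|$. Next, given $\varepsilon>0$, pick $N$ with $d_C(x_n,x_m)\leq\varepsilon$ for $n,m\geq N$; the pair of inequalities $x_n\preceq e^{\varepsilon}x_m$ and $x_m\preceq e^{\varepsilon}x_n$ combines into the cone sandwich
$$0\preceq (x_n-x_m)+(e^{\varepsilon}-1)x_n\preceq (e^{\varepsilon}-1)(x_n+x_m),$$
which by normality plus the uniform norm bound gives $\|x_n-x_m\|\leq C'(e^{\varepsilon}-1)$ for a constant $C'$ depending only on $\gamma$, $D$ and $\|x_1\|$. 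Hence $(x_n)$ is norm-Cauchy and converges to some $x$ in the ambient Banach space. Closedness of $C$ gives $x\in C$; continuity of $\varphi$ gives $\varphi(x)=1$ and in particular $x\neq 0$; and passing to the norm limit in the bracketing $e^{-D}x_1\preceq x_n\preceq e^{D}x_1$ (preserved by closedness of $C$) shows $x\sim_C x_1\sim_C u$, so $x\in S_\varphi$. Finally, fixing $n\geq N$ and letting $m\to\infty$ in $x_n\preceq e^{\varepsilon}x_m$ and $x_m\preceq e^{\varepsilon}x_n$ yields $M(x_n/x;C)\leq e^{\varepsilon}$ and $M(x/x_n;C)\leq e^{\varepsilon}$, whence $d_C(x_n,x)\leq 2\varepsilon$.

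The main obstacle is the initial step of the completeness argument: the $d_C$-Cauchy hypothesis controls only the product $M(x_n/x_1;C)\cdot M(x_1/x_n;C)$, not each factor individually, so a priori one factor could tend to $0$ while the other tends to infinity. The normalization $\varphi(x_n)=1$ is precisely what rules this pathology out, because a suitably rescaled tail of such a drifting sequence would produce a nonzero element on which $\varphi$ would have to vanish in the limit, contradicting continuity and positivity of $\varphi$ on $C_u$. Once the uniform two-sided bracketing is in hand, the remainder is the standard pairing of the cone sandwich inequality with the normality and closedness of $C$.
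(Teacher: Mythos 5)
The paper itself offers no proof of this lemma --- it is quoted from Lemmens--Nussbaum \cite{NB} --- so I can only measure your argument against the standard one. Your first part (the separation axiom via attainment of the infima on a closed cone and pointedness) is correct, and your overall strategy for completeness (transport a $d_C$-Cauchy sequence to the norm topology using normality and closedness) is the right one. But two of the steps that carry the completeness argument are not established, and one of them is false as stated.

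The false step is the implication ``$d_C(x_n,x_m)\le\varepsilon$ implies $x_n\preceq e^{\varepsilon}x_m$ and $x_m\preceq e^{\varepsilon}x_n$''. The hypothesis only bounds the \emph{product} $M(x_n/x_m;C)\,M(x_m/x_n;C)$ by $e^{\varepsilon}$; it gives no upper bound on either factor separately, since replacing $x_m$ by $\lambda x_m$ leaves $d_C$ unchanged while rescaling the two factors in opposite directions. The individual bounds you want amount to $M(x_n/x_m;C)\ge 1$ and $M(x_m/x_n;C)\ge 1$, which hold when the gauge is monotone but not for a general continuous homogeneous $\varphi$: on $C=\R^2_+$ with $\varphi(z)=|z_1-z_2|+z_1/10$, the points $x=(10,10)$ and $y=(0.1,\,1.09)$ both lie on $S_\varphi$, yet $M(x/y;C)=100$ while $e^{d_C(x,y)}=10.9$. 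The same conflation of ``product $\le e^{\varepsilon}$'' with ``each factor $\le e^{\varepsilon}$'' reappears in your closing estimate $M(x_n/x;C)\le e^{\varepsilon}$. The second gap is the uniform bracketing $e^{-D}x_1\preceq x_n\preceq e^{D}x_1$: your justification is that a drifting rescaled tail ``would produce a nonzero element on which $\varphi$ would have to vanish in the limit'', but producing that limit element requires norm-convergence of the rescaled tail, which is precisely what the bracketing is meant to deliver; in an infinite-dimensional Banach space there is no compactness to extract a limit, so the argument is circular.

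Both gaps close if you first reduce to the gauge $\varphi=\|\cdot\|$: since $d_C$ is projective and two elements of $S_\varphi$ on the same ray coincide, the map $x\mapsto x/\|x\|$ is a $d_C$-isometry of $S_\varphi$ onto $\{x\in C_u:\|x\|=1\}$ with inverse $y\mapsto y/\varphi(y)$, so completeness is gauge-independent. For unit vectors normality gives $1=\|x_n\|\le\gamma\,M(x_n/x_m;C)\,\|x_m\|$, hence $M(x_n/x_m;C)\ge 1/\gamma$ and therefore each factor is at most $\gamma e^{\varepsilon}$; writing $a=M(x_n/x_m;C)$, the sandwich $0\preceq a\,x_m-x_n\preceq(e^{\varepsilon}-1)\,x_n$ combined with normality and the reverse triangle inequality yields $|a-1|\le\gamma(e^{\varepsilon}-1)$ and $\|x_n-x_m\|\le 2\gamma(e^{\varepsilon}-1)$. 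After that, your remaining steps (closedness of $C$, passage to the limit in $x_n\preceq\bigl(1+\gamma(e^{\varepsilon}-1)\bigr)x_m$, and the bracketing to see that the limit stays in $C_u$) go through essentially as you wrote them.
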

Next, we recall the definition of projective diameter. 
\begin{definition}[Projective diameter] 
Let $V$ be a real vector space and $C\subseteq V$ a cone. For $S\subseteq C$, the projective diameter of $S$ with respect to $d_C$ is defined as
$$\diam(S;C)=\sup\{d_C(x,y)\colon x,y\in S\text{ and }x\sim_Cy\}.$$
\end{definition}

The projective diameter is the key of the Birkhoff-Hopf theorem. In fact, the latter result shows that the  best Lipschitz constant of a linear mapping between cones can be expressed in terms of the projective diameter of the image of the mapping and thus allows for a formula that characterizes the contraction ratio of linear maps. We recall the theorem below:  
\begin{theorem}[Birkhoff-Hopf]\label{thm:linear_BH}
Let $C$ be a cone in a real vector space $V$, $\cone$ be a cone in a real vector space $W$ and $f\colon V\to W$  a linear map  with $f(C)\subseteq \cone$. Let $\kappa(f;C,\cone)$ be the smallest Lipschitz constant of $f\colon (C,d_C)\to (\cone,d_\cone)$, i.e.
\begin{align*} 
\kappa(f;C,\cone)=\inf\big\{\lambda \geq 0 \ \big| & \ d_\cone(f(x),f(y))\leq \lambda\,  d_C(x,y), \\ 
&\forall x,y \in C \text{ such that } x\sim_C y\text{ and } f(x) \sim_\cone f(y)\big\}.
\end{align*}
Then
$\kappa(f;C,\cone)= \tanh\big(\tfrac{1}{4}\diam(f(C);\cone)\big)$,
with the convention $\tanh(\infty)=1$. 
\end{theorem}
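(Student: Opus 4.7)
The claim splits into the upper bound $\kappa(f;C,\cone)\leq\tanh(\Delta/4)$, where $\Delta:=\diam(f(C);\cone)$, and the matching sharpness. The case $\Delta=\infty$ reduces to $\kappa\leq 1$ (by the convention $\tanh(\infty)=1$) and is immediate: for $x\sim_C y$, rescale so that $y\preceq_C x\preceq_C e^\delta y$ with $\delta=d_C(x,y)$; then linearity together with $f(C)\subseteq\cone$ gives $f(y)\preceq_\cone f(x)\preceq_\cone e^\delta f(y)$, and hence $d_\cone(f(x),f(y))\leq\delta$. I therefore assume $\Delta<\infty$ and $\delta>0$.

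\textbf{Upper bound via a 2D reduction.} Fix $x,y\in C$ with $x\sim_C y$ and $f(x)\sim_\cone f(y)$, and normalize so that $y\preceq_C x\preceq_C e^\delta y$. Setting $\xi:=f(x-y)$ and $\eta:=f(e^\delta y-x)$ in $f(C)$, linearity yields
\begin{equation*}
f(y)=\frac{\xi+\eta}{e^\delta-1},\qquad f(x)=\frac{e^\delta\xi+\eta}{e^\delta-1}.
\end{equation*}
Discarding the trivial cases $\xi=0$ or $\eta=0$ (which force $d_\cone(f(x),f(y))=0$), the hypothesis $\Delta<\infty$ in fact forces $\xi\sim_\cone\eta$ with $d_\cone(\xi,\eta)\leq\Delta$; otherwise the family $\xi+\varepsilon_n\eta,\ \varepsilon_n\xi+\eta\in f(C)$ would provide comparable pairs of unbounded Hilbert distance as $\varepsilon_n\to 0^+$. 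Pick $a,b>0$ with $a\eta\preceq_\cone\xi\preceq_\cone b\eta$ and $\log(b/a)=d_\cone(\xi,\eta)$, and introduce the extremal elements $e_1:=\xi-a\eta,\ e_2:=b\eta-\xi$ in $\cone$; they generate a 2D sub-cone of $\cone$, and restricting to a sub-cone can only enlarge $M(\cdot/\cdot;\cdot)$, so the Hilbert distance in that 2D sub-cone dominates $d_\cone$ and it suffices to bound the former.

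\textbf{Explicit 2D computation.} Expressing $f(x),f(y)$ in the $\{e_1,e_2\}$-basis and applying the cross-ratio formula for Hilbert distance in a 2D cone,
\begin{equation*}
d_\cone(f(x),f(y))\leq\log\frac{(e^\delta b+1)(a+1)}{(e^\delta a+1)(b+1)}.
\end{equation*}
The right-hand side depends on $a$ and on $r:=b/a\in[1,e^\Delta]$. Maximizing first in $a>0$ (the optimum is $a=(re^\delta)^{-1/2}$) and then in $r$ (the result is monotone, optimum at $r=e^\Delta$), and using the identity $1+e^t=2e^{t/2}\cosh(t/2)$, the bound collapses to
\begin{equation*}
\psi(\delta):=2\log\frac{\cosh((\Delta+\delta)/4)}{\cosh((\Delta-\delta)/4)}.
\end{equation*}
A direct derivative check gives $\psi(0)=0$, $\psi'(0)=\tanh(\Delta/4)$, and $\psi''<0$ on $[0,\infty)$; concavity therefore yields $\psi(\delta)\leq\tanh(\Delta/4)\,\delta$, which is the desired upper bound on the contraction ratio.

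\textbf{Sharpness and main obstacle.} The matching lower bound $\kappa\geq\tanh(\Delta/4)$ requires a sequence $(x_n,y_n)$ with $d_\cone(f(x_n),f(y_n))/d_C(x_n,y_n)\to\tanh(\Delta/4)$. For each $\varepsilon>0$ I select $z_1,z_2\in C$ nearly realizing the diameter, $d_\cone(f(z_1),f(z_2))\geq\Delta-\varepsilon$, work inside the 2D sub-cone spanned by $f(z_1),f(z_2)$, and construct $x_n,y_n$ so that every inequality of the upper-bound analysis—the 2D reduction and the two optimizations in $a$ and $r$—becomes an asymptotic equality while $\delta_n:=d_C(x_n,y_n)\to 0$, so that the tangent slope $\psi'(0)$ of the concave bound $\psi$ is recovered in the limit. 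The main obstacle is precisely this simultaneous saturation of several extremal conditions; once achieved, the upper-bound computation runs in reverse to yield the matching lower bound.
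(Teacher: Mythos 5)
The paper itself offers no proof of Theorem \ref{thm:linear_BH}: it is quoted as a classical result with pointers to Birkhoff, Cavazos--Cadena and Eveson--Nussbaum, so the only meaningful comparison is with the cited literature. Your upper bound is, in substance, the Eveson--Nussbaum two-dimensional reduction, and it checks out: the identities $f(y)=(\xi+\eta)/(e^\delta-1)$, $f(x)=(e^\delta\xi+\eta)/(e^\delta-1)$ are correct since $x-y$ and $e^\delta y-x$ lie in $C$; passing to the subcone generated by $e_1=\xi-a\eta$, $e_2=b\eta-\xi$ only increases $M(\cdot/\cdot\,;\cdot)$; the cross-ratio evaluation, the optimizations in $a$ and $r$, the identity giving $\psi(\delta)=2\log\bigl(\cosh((\Delta+\delta)/4)/\cosh((\Delta-\delta)/4)\bigr)$, and the concavity step $\psi(\delta)\le\psi'(0)\,\delta=\tanh(\Delta/4)\,\delta$ are all correct. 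Two details still need writing out: the degenerate cases ($\xi$ proportional to $\eta$, or $e_1,e_2$ linearly dependent), and the comparability claim --- your pairs $\xi+\varepsilon\eta$, $\varepsilon\xi+\eta$ do lie in $f(C)$ and are always comparable, but you owe the quantitative step showing that $M(\xi+\varepsilon\eta/\varepsilon\xi+\eta;\cone)\,M(\varepsilon\xi+\eta/\xi+\varepsilon\eta;\cone)$ genuinely diverges as $\varepsilon\to0^+$ when $\xi\not\sim_\cone\eta$.

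The genuine gap is the sharpness half, $\kappa(f;C,\cone)\ge\tanh(\Delta/4)$, which you only outline, and the outline as stated cannot be run verbatim: ``the upper-bound computation in reverse'' would need a \emph{lower} bound on $d_\cone(f(x_n),f(y_n))$, whereas the subcone comparison you invoke ($d_\cone\le d_K$) only ever gives upper bounds, so ``saturating'' it is not a matter of choosing points cleverly --- it requires a separate exact computation in the ambient cone. The missing lemma is: if $w_1,w_2\in\cone\setminus\{0\}$ are comparable with $a=\sup\{\alpha:\alpha w_2\preceq w_1\}$ and $b=\inf\{\beta:w_1\preceq\beta w_2\}$, then for $s>t\ge0$ one has $M\bigl((w_1+sw_2)/(w_1+tw_2);\cone\bigr)=(s+a)/(t+a)$ and $M\bigl((w_1+tw_2)/(w_1+sw_2);\cone\bigr)=(t+b)/(s+b)$, hence
\begin{equation*}
d_\cone(w_1+sw_2,\,w_1+tw_2)=\log\frac{(s+a)(t+b)}{(t+a)(s+b)}
\end{equation*}
\emph{exactly} (one checks directly that $(\beta-1)w_1+(\beta s-t)w_2\in\cone$ forces $\beta\ge(s+a)/(t+a)$, and symmetrically for the other quotient). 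With this in hand the lower bound closes: take $w_i=f(z_i)$ with $d_\cone(w_1,w_2)\ge\Delta-\varepsilon$, set $x=z_1+sz_2$, $y=z_1+tz_2$, use $d_C(x,y)\le\log(s/t)$ (subcone comparison in $C$, which is now the correct direction), and let $t\to s$ with $s=\sqrt{ab}$; the ratio tends to $s(b-a)/\bigl((s+a)(s+b)\bigr)=(\sqrt b-\sqrt a)/(\sqrt b+\sqrt a)=\tanh\bigl(\tfrac14\log(b/a)\bigr)\ge\tanh\bigl(\tfrac14(\Delta-\varepsilon)\bigr)$. Without stating and proving this exact-distance formula, the equality asserted in the theorem is not established.
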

Different proofs of the above theorem are known, see for instance \cite{birkhoff1,cavazos2003alternative,BHNB}. The quantity $\kappa(f;C,\cone)$ is usually referred to as the Birkhoff contraction ratio  and in the following we shall write $\kappa(f)$ in place of $\kappa(f;C,\cone)$ when there is no danger of confusion.
\begin{remark}\label{rmk:positive_linear_map}
An important example of cone is $C=\R^n_+$, consisting of all nonnegative vectors in $V=\R^n$, with interior being the set of positive vectors $\R^n_{++}$. For this cone we have 
$d_{\R^n_+}(x,y) = \log\big( \max_i(x_i/y_i)\max_j(y_j/x_j)\big)$, for any $x,y\in \R^n_{++}$. Moreover, if $f:\R^n\to\R^n$ is linear and positive, that is $f(\R^n_+\setminus\{0\})\subseteq \R^n_{++}$, then  it holds (see  \cite[Thm. A.6.2]{NB} e.g.) 
\begin{equation}\label{matrixBHformula}
\diam(f(\R^n_{+});\R^n_{+})=\max_{i,j,k,l}\log\left(\frac{\ps{e_i}{f(e_j)}\ps{e_k}{f(e_l)}}{\ps{e_i}{f(e_l)}\ps{e_k}{f(e_j)}}\right)\, ,
\end{equation}
where $e_1,\dots,e_n$ is the canonical basis of $\R^n$ and $\ps{\cdot}{\cdot}$ the Euclidean scalar product. Therefore $\diam(f(\R^n_{+});\R^n_{+})$ is bounded and thus $\kappa(f)<1$, by Theorem \ref{thm:linear_BH}. In other words, any positive linear map $f\colon\R^n\to \R^n$ acts as a strict contraction with respect to the Hilbert metric. 

\end{remark}
\section{Birkhoff-Hopf theorem for weakly multilinear maps}\label{sec:multilinear_BH}
We are interested in the case of maps defined on product of cones $\Co = C_1\times \cdots \times C_\nu$. 
As the product of cones is a cone itself, the Birkhoff-Hopf theorem can be applied to linear mappings acting on $\Co$ with not much difficulties. However, by exploiting the particular product structure of $\Co$ we can obtain a  tighter contraction ratio given in terms of what we call the \textit{mode-$i$ Birkhoff contraction ratio} of the mapping. Moreover, this allows us to generalize the theorem from linear to multilinear mappings. 
\subsection{The contraction ratio of multilinear mappings}
Let $\nu$ be a positive integer. Let $V_1,\ldots,V_\nu$ and $W$ be real vectors spaces and define $\V = V_1\times \ldots \times V_\nu$, the Cartesian product of the $V_i$. For $x\in \V$ and $i=1,\ldots,\nu$, we denote by $x_i$ the projection of $x$ onto $V_i$ so that $x=(x_1,\ldots,x_\nu)$ with $x_j\in V_j$ for all $j$. 
\begin{definition}[Multilinear map]\label{def:multilinear_map}
Let $\V$ be as above. A mapping $f\colon \V\to W$ is multilinear if for each $i=1,\dots,\nu$ and every $z\in \V$, the mapping $f|_{z}^i\colon V_i\to W$ 
\begin{equation}\label{deffi}
	f|_{z}^i(x_i)=f(z_1,\ldots,z_{i-1},x_i,z_{i+1},\ldots,z_\nu) 
\end{equation}
is linear. 
\end{definition}

We aim at extending the Birkhoff-Hopf theorem to multilinear mappings. However, in order to facilitate its application, we introduce a wider class of maps, which we call \textit{weakly multilinear}, and prove the theorem for this larger class. 
\begin{definition}[Weak multilinearity]
	Let $f\colon \V\to W$. We say that $f$ is weakly multilinear, if there exist positive integers $s$ and $1\leq i_1<\ldots<i_s\leq \nu$ and a multilinear mapping $\tilde f\colon  V_{i_1}\times\ldots\times V_{i_s}\to W$ such that $f(x)=\tilde f(x_{i_1},\ldots,x_{i_s})$ for every $x\in\V$. 
\end{definition}
A simple example of mapping which is weakly multilinear but not multilinear is $f\colon V_1\times V_2\to W$ defined as $f(x)=g(x_1)$ where $g\colon V_1 \to W$ is a linear mapping. 

Let $C_1 \subseteq V_1,\ldots,C_\nu\subseteq V_{\nu}$ and $\cone\subseteq W$ be cones and let $\Co = C_1\times \ldots \times C_\nu$. Let $f\colon \V\to W$ be a weakly multilinear mapping such that $f(\Co)\subseteq \cone$. As for each fixed $z\in\Co$ and $i=1,\ldots,\nu$, the mapping $f|_z^i$ defined as in \eqref{deffi} is either linear or constant, $f|_z^i(C_i)$ is either a cone or a singleton. In both cases, $f|_z^i(C_i)$ is a subset of the cone $\cone$ and so we may consider its diameter with respect to $d_\cone$. This observation allows us to define the mode-$i$ Birkhoff contraction ratio of a weakly multilinear mappings as follows:
\begin{definition}[Mode-$i$ Birkhoff contraction ratio]\label{defKi}
	Let $f\colon \V\to W$ be a weakly multilinear map such that $f(\Co)\subseteq \cone$. The mode-$i$ Birkhoff contraction ratio of $f$ is defined as
	\begin{equation*}
	\kappa_{i}(f; \Co,\cone) =\sup_{z\in \Co} \tanh\big(\tfrac{1}{4}\diam(f|_z^i(C_i);\cone)\big).
	\end{equation*}
	For brevity, we write $\kappa_i(f)$ in place of $\kappa_{i}(f; \Co,\cone)$, when there is no danger of confusion.
\end{definition}
Note that if $f|_z^i$ is constant, then $f|_z^i(C_i)$ is a singleton and so~{$\diam(f|_z^i(C_i);\cone)=0$}, which implies that $\kappa_{i}(f)=0$.

Now, we state our Birkhoff-Hopf theorem for weakly multilinear mappings.
\begin{theorem}[Multilinear Birkhoff-Hopf]\label{mainres}
Let $f\colon  \V\to W$ be weakly multilinear and suppose that $f(\Co)\subseteq \cone$. Let $\kappa_{i}(f)$ be the mode-$i$ Birkhoff contraction ratio of $f$. Then it holds
$$
(\kappa_1(f),\dots, \kappa_\nu(f)) = \inf\Big\{\textstyle{\lambda \in \R^\nu_+ : d_\cone(f(x),f(y))\leq \sum_{i=1}^\nu \lambda_i\, d_{C_i}(x_i,y_i), \, \forall x,y\in\Co}\Big\}
$$
where the infimum is understood in the sense of Definition \ref{def:inf}. 
\end{theorem}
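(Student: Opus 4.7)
The plan is to establish the two defining conditions of the infimum from Definition \ref{def:inf}: first that the vector $\kappa(f):=(\kappa_1(f),\ldots,\kappa_\nu(f))$ belongs to the right-hand set, and second that it is componentwise dominated by every other element of that set.

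To verify the Lipschitz upper bound I would use a telescoping argument that swaps one coordinate at a time. For $x,y\in\Co$, define $z^{(0)}=x$ and $z^{(k)}=(y_1,\ldots,y_k,x_{k+1},\ldots,x_\nu)$, so that $z^{(\nu)}=y$ and consecutive points differ only in the $k$-th entry. Writing $g_k:=f|_{z^{(k-1)}}^k$, which is either linear or constant by weak multilinearity, one has $f(z^{(k-1)})=g_k(x_k)$ and $f(z^{(k)})=g_k(y_k)$. The classical Birkhoff--Hopf theorem (Theorem \ref{thm:linear_BH}) applied to $g_k\colon V_k\to W$ yields
$$d_\cone\!\bigl(f(z^{(k-1)}),f(z^{(k)})\bigr)\leq \tanh\!\bigl(\tfrac14\diam(g_k(C_k);\cone)\bigr)\,d_{C_k}(x_k,y_k)\leq \kappa_k(f)\,d_{C_k}(x_k,y_k),$$
the last step being Definition \ref{defKi}. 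A triangle inequality for $d_\cone$ then delivers the required bound with constants $\kappa_1(f),\ldots,\kappa_\nu(f)$, showing that $\kappa(f)$ lies in the right-hand set.

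For sharpness in each coordinate I would fix an index $i$ and a point $z\in\Co$, then specialize the assumed inequality to test pairs $x,y\in\Co$ that agree with $z$ in every slot except the $i$-th. Since $d_{C_j}(z_j,z_j)=0$ for $j\neq i$, the Lipschitz hypothesis collapses to
$$d_\cone\!\bigl(f|_z^i(x_i),f|_z^i(y_i)\bigr)\leq \lambda_i\,d_{C_i}(x_i,y_i)\qquad\forall\, x_i,y_i\in C_i,$$
so $\lambda_i$ is a valid Lipschitz constant for the linear (or constant) map $f|_z^i$. Theorem \ref{thm:linear_BH} then forces $\tanh(\tfrac14\diam(f|_z^i(C_i);\cone))\leq \lambda_i$; taking the supremum over $z\in\Co$ yields $\kappa_i(f)\leq \lambda_i$, which is the desired componentwise dominance.

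The main technical nuisance I anticipate is bookkeeping around comparability, because Theorem \ref{thm:linear_BH} only produces a useful bound when both $x\sim_C y$ and $g(x)\sim_\cone g(y)$. To execute the telescope one uses that any linear cone-preserving map sends a comparable pair in $C_k$ either to a comparable pair in $\cone$ or jointly to $0$: the first case invokes the classical theorem directly, while the second makes the corresponding telescoping term vanish. Coordinate indices $k$ for which $x_k\not\sim_{C_k}y_k$ push $d_{C_k}(x_k,y_k)$ to $+\infty$, rendering the inequality vacuous under the convention $\tanh(+\infty)=1$, and the same convention handles any link of the telescope at which $d_\cone$ diverges.
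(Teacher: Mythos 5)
Your proposal is correct and follows essentially the same route as the paper: a one-coordinate-at-a-time telescoping argument combined with the classical Birkhoff--Hopf theorem for the upper bound, and testing the Lipschitz inequality on pairs differing in a single slot to get the componentwise lower bound. The only (cosmetic) difference is that you telescope directly on the weakly multilinear map, treating each restriction $f|_z^k$ as linear or constant, whereas the paper first proves the multilinear case and then reduces the weakly multilinear case to it via the underlying multilinear map $\tilde f$.
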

\begin{proof} 
	Set $\Co = C_1\times\ldots\times C_\nu$ and let $x,y\in \Co$ be such that $x\sim_{\Co}y$.
	First, suppose that $f$ is multilinear.
	Define $z[1],\ldots,z[\nu+1]\in \Co$ as $z[1]=x$, $z[\nu+1]=y$ and $z[i]=(y_1,\ldots,y_{i-1},x_{i},\ldots,x_\nu)$ for every $1<i\leq \nu$. By the triangle inequality, we~have 
	\begin{align*}\label{trieqpf}
	d_{\cone}\big(f(x),f(y)\big)
	\leq\sum_{i=1}^\nu d_\cone\big(f(z[i]),f( z[i+1])\big)=\sum_{i=1}^\nu d_\cone\big(f|^i_{z[i]}(x_i),f|^i_{z[i+1]}(y_i)\big).
	\end{align*}
	Since for any $i$ we have $f|^i_{z[i]}=f|^i_{z[i+1]}$,  by
	the Birkhoff-Hopf theorem \ref{thm:linear_BH} we deduce that 
	\begin{align*}
	d_{\cone}\big(f|^i_{z[i]}(x_i),f|^{i}_{z[i+1]}(y_i)\big)&\leq\tanh\big(\tfrac{1}{4}\diam(f|_{z[i]}^i(C_i);\cone)\big)\,d_{C_i}(x_i,y_i)\\
	&\leq\kappa_{i}(f)\,d_{C_i}(x_i,y_i).
	\end{align*}
	Combining the above inequalities proves the claim.
	Now, if $f$ is weakly multilinear, then there exist integers $1\leq i_1<\ldots<i_s\leq \nu$ and a multilinear mapping $\tilde f\colon V_{i_1}\times\ldots\times V_{i_s}\to W$ such that $f(v)=\tilde f(v_{i_1},\ldots,v_{i_{s}})$ for all $v\in\Co$. In particular, we then have 
	$d_\cone(f(x),f(y))=d_\cone(\tilde f(x_{i_1},\ldots,x_{i_{s}}),\tilde f(y_{i_1},\ldots,y_{i_{s}}))$. Hence, the above argument implies that
	\begin{equation}\label{tildeeq}
	d_\cone(f(x),f(y))\leq \sum_{j=1}^s \kappa_{j}(\tilde f)\, d_{C_{i_j}}(x_{i_j},y_{i_j}).\end{equation}
	Now, let $z\in\Co$. 
	As $f$ is weakly multilinear, $f|_z^i$ is either constant or linear. Let $i\in\{1,\ldots,\nu\}$ and suppose that there is $k$ such that $i=i_k$.
	Then, with $\tilde z=(z_{i_1},\ldots,z_{i_s})$, it holds
	\begin{align*}
	f|_z^i(C_i)&=f(z_{1},\ldots,z_{i-1},C_i,z_{i+1},\ldots,z_\nu)\\
	&=\tilde f(z_{i_1},\ldots,z_{i_{k-1}},C_i,z_{i_{k+1}},\ldots,z_{i_s})=\tilde f|_{\tilde z}^i(C_i).
	\end{align*}
	As the projection $z\mapsto \tilde z$ is surjective, it follows that $\kappa_{i_k}(f)=\kappa_{k}(\tilde f)$. Now, if $i\notin \{i_1,\ldots,i_s\}$, then it holds $\kappa_i(f)=0$. It follows that 
	$$ \sum_{j=1}^s \kappa_{j}(\tilde f)\, d_{C_{i_j}}(x_{i_j},y_{i_j})=\sum_{j=1}^s \kappa_{i_j}(f)\, d_{C_{i_j}}(x_{i_j},y_{i_j})=\sum_{i=1}^\nu \kappa_{i}(f)\, d_{C_{i}}(x_{i},y_{i}),$$
	and, with \eqref{tildeeq}, this shows that $(\kappa_1(f),\dots,\kappa_\nu(f))$ belongs to $\Omega = \{\lambda \in \R^\nu_+ : d_\cone(f(x),f(y))\leq \sum_{i=1}^\nu \lambda_i\, d_{C_i}(x_i,y_i), \, \forall x,y\in\Co\}$. 
	
	Finally note that, for any $\lambda \in \Omega$, any $z\in \Co$ and any $\varepsilon>0$, by Theorem \ref{thm:linear_BH}, there exist  $x^{\epsilon}_i,y^{\epsilon}_i\in C_i$ with $x^{\epsilon}_i\neq y^{\epsilon}_i$ such that 	$$\tanh\big(\tfrac{1}{4}\operatorname{diam}(f|_z^i(C_i);\Gamma)\big)\leq\frac{d_{\Gamma}(f|_z^i(x^{\epsilon}_i),f|_z^i(y^{\epsilon}_i))}{d_{C_i}(x^{\epsilon}_i,y^{\epsilon}_i)}+\epsilon\leq \lambda_i + \epsilon\, .$$
It follows that
$\kappa_i(f) \leq     \lambda_i+\epsilon$ and, as $\epsilon>0$ is arbitrary, we have $\kappa_i(f)\leq \lambda_i$. Hence, $(\kappa_1(f),\dots,\kappa_\nu(f))$ is a lower bound of $\Omega$. As $(\kappa_1(f),\dots,\kappa_\nu(f))\in\Omega$, it follows that $(\kappa_1(f),\dots,\kappa_\nu(f))$ is the infimum of $\Omega$ which concludes the proof. 
\end{proof}

In the next section, we   show how to exploit Theorem \ref{mainres} in order to obtain a  Perron-Frobenius theorem for multilinear self-mappings on~{$\Co = C_1\times\cdots\times C_\nu$}.

\section{Multilinear  Perron-Frobenius theorem}\label{sec:multilinear_PF}
As before, let $\V=V_1\times\cdots\times V_\nu$ be the product of Banach spaces and let $f_i:\V\to V_i$, $i=1,\dots,\nu$ be such that $f_i(\Co)\subseteq C_i$, for normal cones $C_1,\dots,C_\nu$ and $\Co = C_1\times\cdots\times C_\nu\subseteq \V$.  Our main goal in this section is to provide conditions under which the system of functional equations
\begin{equation}\label{eq:system_spectral_equations}
\left\{ 
\begin{array}{l}
f_1(x_1,\dots,x_\nu) = \lambda_1 x_1 \\
\,\,\vdots \\
f_\nu(x_1,\dots,x_\nu) = \lambda_\nu x_\nu
\end{array}
\right. \quad \lambda_1,\dots,\lambda_\nu \geq 0
\end{equation} 
has a unique solution, which can be efficiently computed. 

To this end, we need a preliminary fixed point result which we present in the setting of general metric spaces, as it does not depend on the choice of the metric.

\subsection{A Banach fixed point theorem on product of metric spaces}\label{fpsec}
Let $(M_1,\mu_1),\ldots,(M_\nu,\mu_\nu)$ be metric spaces and set $\M=M_1\times \dots \times M_{\nu}$. As above, if $x\in\M$, then we denote by $x_i$ the projection of $x$ onto $M_i$. Furthermore, consider the cone-metric $\delta:\M\times\M\to\R_+^\nu$ defined by
$$
(x,y)\mapsto \delta(x,y) = (\mu_1(x_1,y_1), \dots, \mu_\nu(x_\nu,y_\nu))\, .
$$

Note that the system of  equations \eqref{eq:system_spectral_equations} can be compactly written as $f(x) = (\lambda_1x_1, \dots,\lambda_\nu x_\nu)$, where $f$ is the self-map $f=(f_1,\dots,f_\nu):\V\to\V$. This suggests that existence of a solution to \eqref{eq:system_spectral_equations}   can be addressed by means of a fixed point argument. Moreover, if $f$ is a contraction, then the solution must be unique. In what follows we observe how the contractivity of $f$ can be related to the concept of Lipschitz matrix.
\begin{definition}
Let $(M_1,\mu_1),\ldots,(M_\nu,\mu_\nu)$ be metric spaces and let $\M=M_1\times \dots \times M_{\nu}$. 
We say that an entry-wise nonnegative matrix $A$ is a Lipschitz matrix for $f:\M\to\M$ if 
\begin{equation}\label{eq:lip-matrix}
\delta(f(x),f(y))\preceq A\, \delta(x,y) 
\end{equation}
for all 
$x,y\in \M$ and where the inequality is understood coordinate-wise, i.e.\ with respect to the partial ordering induced by $\R_+^\nu$.
\end{definition}

A Lipschitz matrix for  $f:\M\to\M$ gives an estimate on the variation of $f$ on each subspace $(M_i,\mu_i)$. However, the next theorem shows that the spectral radius of such matrix provides an estimate on the variation of $f$ on the whole space $\M$. Since the seminal work of Perov \cite{perov1964}, several authors have developed new fixed point results for vector valued and cone metric spaces (see for example \cite{cvetkovic2014quasi,radenovic2017some,vetro2018some} and the review \cite{aleksic2018new}) and our next result contributes to this active line of research. 
 In particular, note that such result extends the Banach fixed point theorem for cone-metrics (see e.g.\ \cite[Thm.\ 4.1]{aleksic2018new}) to the setting where a Lipschitz matrix exists, rather than just a Lipschitz constant with respect to some fixed metric product. 

 \begin{theorem}\label{thm:fixed-point}
 Let $(M_i,\mu_i)$ be complete metric spaces and let $\M = M_1\times\dots\times\M_\nu$. Let $f:\M\to\M$ be a mapping with Lipschitz matrix $A$. If $\rho(A)<1$, then $f$ has a unique fixed point $x^\star \in \M$ and $\lim_{n\to\infty}f^n(x) = x^\star$, for any $x\in \M$. Moreover, it holds
 $$
 \delta\big(f^n(x), x^\star\big) \preceq (I-A)^{-1}A^n \delta\big(f(x),x\big)
$$
for any $x\in \M$ and any integer $n\geq 0$.
 \end{theorem}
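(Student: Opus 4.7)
The plan is to run a Picard iteration argument and transfer the scalar Banach contraction proof verbatim, replacing the role of the Lipschitz constant by the matrix $A$ and using the Neumann series for $(I-A)^{-1}$ which converges because $\rho(A)<1$.

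First I would iterate the Lipschitz matrix inequality. Setting $x^{(n)}=f^n(x)$ for an arbitrary $x\in\M$, a straightforward induction using \eqref{eq:lip-matrix} and the fact that multiplication by the entry-wise nonnegative matrix $A$ preserves the coordinate-wise order gives
\begin{equation*}
\delta\bigl(x^{(n+1)},x^{(n)}\bigr)\preceq A^n\,\delta\bigl(f(x),x\bigr).
\end{equation*}
The triangle inequality holds coordinate-wise for $\delta$, so for $m>n$ one obtains
\begin{equation*}
\delta\bigl(x^{(m)},x^{(n)}\bigr)\preceq \sum_{k=n}^{m-1}A^k\,\delta\bigl(f(x),x\bigr).
\end{equation*}
Since $\rho(A)<1$, the Neumann series $\sum_{k\geq 0}A^k$ converges to $(I-A)^{-1}$ in operator norm, and $A^n\to 0$. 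In particular the partial sums form a Cauchy sequence of entry-wise nonnegative matrices, hence each coordinate of $\delta(x^{(m)},x^{(n)})$ tends to $0$ as $m,n\to\infty$. By completeness of each $(M_i,\mu_i)$, the projections $x^{(n)}_i$ converge to some $x^\star_i\in M_i$, and we set $x^\star=(x^\star_1,\ldots,x^\star_\nu)\in\M$.

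Next I would check that $x^\star$ is a fixed point. Note that \eqref{eq:lip-matrix} immediately implies that $f$ is continuous in each coordinate, and hence continuous from $\M$ to $\M$ with respect to the product topology: if $\delta(y^{(n)},y)\to 0$ coordinate-wise, then $\delta(f(y^{(n)}),f(y))\preceq A\,\delta(y^{(n)},y)\to 0$ coordinate-wise. Therefore $f(x^\star)=\lim_n f(x^{(n)})=\lim_n x^{(n+1)}=x^\star$. Uniqueness is equally direct: if $y^\star$ is another fixed point, then iterating \eqref{eq:lip-matrix} yields $\delta(x^\star,y^\star)\preceq A^n\delta(x^\star,y^\star)$ for every $n$, and since $A^n\to 0$ entry-wise we get $\delta(x^\star,y^\star)=0$, so $x^\star=y^\star$.

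Finally I would derive the explicit error bound by letting $m\to\infty$ in the Cauchy estimate above. Continuity of each $\mu_i$ yields
\begin{equation*}
\delta\bigl(f^n(x),x^\star\bigr)\preceq \sum_{k=n}^{\infty}A^k\,\delta\bigl(f(x),x\bigr)=A^n(I-A)^{-1}\delta\bigl(f(x),x\bigr),
\end{equation*}
which equals $(I-A)^{-1}A^n\delta(f(x),x)$ because $A$ and $(I-A)^{-1}$ commute. The only slightly delicate point is that all manipulations involving $\preceq$ must respect the cone $\R^\nu_+$; this is automatic because $A$, $A^n$, and the partial sums of the Neumann series are all entry-wise nonnegative, so multiplying a coordinate-wise inequality on the left by any of them preserves it. There is no genuine obstacle beyond setting up the bookkeeping carefully; the argument is the classical Banach proof with the scalar contraction factor $q<1$ replaced by a nonnegative matrix with spectral radius less than one.
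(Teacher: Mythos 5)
Your proof is correct, and it follows the same overall Picard--iteration strategy as the paper; the one substantive difference lies in how the Cauchy estimate for the iterates is obtained. You iterate the Lipschitz inequality to get $\delta(f^{k+1}(x),f^k(x))\preceq A^k\,\delta(f(x),x)$ and then sum the Neumann series $\sum_{k\geq n}A^k=A^n(I-A)^{-1}$ directly, concluding coordinate-wise that each $f^n(x)_i$ is Cauchy in the complete factor $(M_i,\mu_i)$. The paper instead first establishes the telescoping inequality $\delta(f^{m+1}(x),f^{n}(x))\preceq (I-A)^{-1}\bigl(\delta(f^{n+1}(x),f^n(x))-\delta(f^{m+2}(x),f^{m+1}(x))\bigr)$, using only the entry-wise nonnegativity of $(I-A)^{-1}$, and then passes to a single complete metric $\vartheta(x,y)=\|\delta(x,y)\|$ on the product (for a monotonic norm $\|\cdot\|$) to invoke completeness. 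The two routes are equivalent in strength and both yield the stated bound $(I-A)^{-1}A^n\delta(f(x),x)$ (your observation that $A^n$ and $(I-A)^{-1}$ commute is the right way to reconcile the order of the factors); yours is the more literal transcription of the scalar Banach argument via the matrix geometric series, while the paper's version isolates the role played by nonnegativity of $(I-A)^{-1}$ and defers any infinite summation to the final limit $m\to\infty$.
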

\begin{proof}
Let $x\in\M$, we have
$
\delta\big(f^2(x),f(x)\big)\preceq A\delta\big(f(x),x\big).
$ 
 Adding $\delta(f(x),x)$ on both sides of such inequality we get
\begin{equation*}
\delta\big(f^2(x),f(x)\big)+\delta(f(x),x)\preceq A\delta\big(f(x),x\big)+\delta(f(x),x),
\end{equation*}
which  can be rearranged into
\begin{equation*}
(I-A)\delta(f(x),x)\preceq \delta(f(x),x)-\delta\big(f^2(x),f(x)\big).
\end{equation*}
It follows from $\rho(A)<1$ that $I-A$ is invertible and that $(I-A)^{-1}$ is component-wise nonnegative. Therefore
\begin{equation*}
\delta(f(x),x)\preceq (I-A)^{-1}\Big( \delta(f(x),x)-\delta\big(f^2(x),f(x)\big)\Big).
\end{equation*}
Then, by the triangle inequality, for every $m\geq n>0$ we have
\begin{align}\label{mainineqthm1}
\delta\big(f^{m+1}(x),f^{n}(x)\big)&\preceq \sum_{i=n}^m\delta\big(f^{i+1}(x),f^{i}(x)\big)\notag \\
&\preceq (I-A)^{-1}\sum_{i=n}^m \delta\big(f^{i+1}(x)-f^i(x)\big)-\delta\big(f^{i+2}(x),f^{i+1}(x)\big)\notag \\
&= (I-A)^{-1} \Big( \delta\big(f^{n+1}(x),f^n(x)\big) - \delta\big(f^{m+2}(x),f^{m+1}(x)\big) \Big)
\end{align}
In particular, if we set $n=1$ and let $m\to\infty$, we get
\begin{equation}\label{eq:ser}
\sum_{i=1}^{\infty}\delta\big(f^i(x),f^{i+1}(x)\big) \preceq (I-A)^{-1}\delta\big(f^2(x),f(x)\big).
\end{equation}

Now, let $\|\cdot\|$ be any monotonic norm on $\R^\nu_+$, i.e.\ such that $a\preceq b$ implies $\|a\|\leq \|b\|$. Then $\vartheta(x,y) = \|\delta(x,y)\|$ is a metric on $\M$ and the topology induced by $\vartheta$ is the product topology. Thus, as $(M_i, \mu_i)$ are complete, $(\M,\vartheta)$ is complete too.
It follows from \eqref{eq:ser} that $\sum_{i=1}^\infty\vartheta(f^i(x),f^{i+1}(x))<\infty$ and so $\{f^{n}(x)\}_{n}$ is a Cauchy sequence. Thus, by completeness, $f^n(x)$ converges to some $x^\star\in \M$. As $f$ is continuous this implies $x^\star = \lim_n f^{n+1}(x) = f(\lim_n f^n(x))=f(x^\star)$, i.e. $x^\star$ is a fixed point of $f$. Moreover, $x^\star$ is the unique fixed point. In fact, if $y$ is such that $f(y)=y$, then 
$$
0 \leq \vartheta(x^\star,y) = \vartheta( f^n(x^\star), f^n(y)) \leq \|A^n\|\, \vartheta(x^\star,y)
$$ 
and letting $n\to\infty$ we have $x^\star = y$. Eventually, combined with  \eqref{mainineqthm1}, we have
\begin{align*}
\lim_{m\to\infty}\delta(f^n(x),f^{m+1}(x))=\delta\big(f^n(x),x^\star\big) &\preceq (I-A)^{-1}\delta\big(f^{n+1}(x),f^n(x)\big)\\
&\preceq (I-A)^{-1}A^n \delta(f(x),x)\, ,
\end{align*}
which concludes the proof.
\end{proof}

\begin{remark}\label{rem:multi-homogeneous}
 Multi-homogeneous mappings,  introduced in \cite{multiPF}, are an example of maps having a well-defined non-trivial Lipschitz matrix. Recall that, given a product of cones $\Co=C_1\times\cdots\times C_\nu$, a map $f:\Co\to \Co$ is said to be multi-homogeneous if there exist coefficients $B_{ij}\geq 0$ such that $f(x_1,\dots,\lambda x_j,\dots, x_\nu)_i = \lambda^{B_{ij}} f(x_1,\dots,x_\nu)_i$, for any nonnegative number $\lambda\geq 0$. The matrix $B$ is called \textit{homogeneity matrix} of $f$. When a multi-homogeneous map is order-preserving, i.e.\ when $x\preceq_{\Co} y$ implies $f(x)\preceq_{\Co} f(y)$, then its homogeneity matrix is also a Lipshitz matrix. In fact, as 
 $x = (x_1,\dots,x_\nu)\preceq_\Co (M(x_1/y_1;C_1)y_1,\dots,M(x_\nu/y_\nu;C_\nu)y_\nu) =: G(x/y)$ for any $x,y\in \Co$,  we have
\begin{align*}
d_{C_i}(f(x)_i,f(y)_i) &= \log\{M(f(x)_i/f(y)_i;C_i)\, M(f(y)_i/f(x)_i;C_i)\}\\
&\leq \log\{M(f(G(x/y))_i/f(y)_i; C_i)\, M(f(G(y/x))_i/f(x)_i; C_i)\}\\
&=\textstyle{\sum_{j=1}^\nu}B_{ij}\, d_{C_j}(x_j,y_j)\, . 
\end{align*}
Several examples of multi-homogeneous mappings can be found in \cite{multiPF}. In particular, note that any  multilinear map $f=(f_1,\dots,f_\nu):\Co\to\Co$ is multi-homogeneous, with Lipschitz matrix $B_{ij}=1$, for all $i,j$. However, Theorem \ref{mainres} shows that, for multilinear mappings, $A_{ij}=\kappa_j(f_i)$ is a Lipschitz matrix too and, since $\kappa_j(f_i)\leq 1$,  it always holds $\rho(A)\leq \rho(B)$.
\end{remark}
In the next section we combine Theorems \ref{mainres} and \ref{thm:fixed-point}  to obtain a Perron-Frobenius theorem for multilinear mappings.
\subsection{Perron-Frobenius theorem}\label{subsec:multilinear_PF}
Let $V_1, \ldots,V_\nu$ be Banach spaces, and let $C_1\subseteq V_1,\ldots,C_{\nu}\subseteq V_\nu$ be cones all different from $\{0\}$. Set $\Co=C_1\times \ldots \times C_{\nu}$ and let $u=(u_1,\ldots,u_\nu)$ with $u_i\in C_i\setminus\{0\} $ for all $i$. Define $\Co_u$ as $\Co_u=(C_1)_{u_1}\times \ldots \times (C_{\nu})_{u_{\nu}}$ where $(C_i)_{u_i}$ is the component of $u_i$ in $C_i$ defined in \eqref{defCu}. Finally, let $\varphi_i:(C_i)_{u_i}\to (0,\infty)$ be continuous and homogeneous, and consider the product of unit slices $\So(\Co_u)=S_{\varphi_1}\times \ldots \times S_{\varphi_\nu}$ where $S_{\varphi_i} =\{x_i\in (C_i)_{u_i} : \varphi_i({x_i})=1\}$ for every~$i \in \{1,\dots,\nu\}$.

Throughout this section we assume that each $C_i$ is normal and closed. Therefore, by Lemma \ref{NBcomplete}, we have that $(S_{\varphi_i}, d_{C_i})$ is a complete metric space for every $i=1,\dots,\nu$,  with $d_{C_i}$ being the Hilbert metric on $C_i$. Thus, when a cone-preserving map has a Lipshitz matrix $A$ with respect to the Hilbert metric and $\rho(A)<1$, the following Perron-Frobenius type theorem follows from Theorem~\ref{thm:fixed-point}
\begin{theorem}\label{mainPF}
	Let $u\in \V$ be such that $u_i\in C_i\setminus \{0\}$ for all $i$ and let $f\colon \Co_u\to \Co_u$ be such that $f(\Co_u)\subseteq \Co_u$. Let  $A$ be an entry-wise nonnegative matrix such that 
	\begin{equation*}
	d_{C_i}(f(x)_i,f(y)_i)\leq \sum_{j=1}^\nu A_{ij}\,d_{C_j}(x_j,y_j)\qquad\quad \forall x,y\in\M\, ,
	\end{equation*}	
	for all $i=1,\dots,\nu$.  If $\rho(A)<1$, then there exists a unique $x^\star\in \So(\Co_u)$ and unique positive coefficients $\lambda_1,\ldots,\lambda_\nu>0$ such that $f(x^\star)_i=\lambda_i\,x^\star_i$, for all $i=1,\dots,\nu$.  Moreover, the sequence
		 \begin{equation}\label{pmdef}
		x^{({n+1})} = \left(\textstyle{\dfrac{f(x^{(n)})_1}{\varphi_1\big(f(x^{(n)})_1\big)},\ldots,\dfrac{f(x^{(n)})_\nu}{\varphi_\nu\big(f(x^{(n)})_\nu\big)}}\right)
		\end{equation}
		converges to $x^\star$, as $n\to\infty$, for every $x^{(0)}\in \So(\Co_u)$ and, for every nonnegative $w$ such that $wA=\rho(A)w$, there exists $\gamma>0$ such that
	    \begin{equation}\label{eq:conv-bound}
	        w_i \, d_{C_i}\big(x^{(n+1)}_i,x^\star_i\big) \leq \gamma \, \rho(A)^n 
	    \end{equation}
		for all $i=1,\dots,\nu$ and any integer $n\geq 0$.
\end{theorem}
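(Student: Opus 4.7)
The plan is to reduce the claim to the contractive fixed point theorem (Theorem~\ref{thm:fixed-point}) applied to the normalization of $f$ on $\So(\Co_u)$. Specifically, I would define $g\colon \So(\Co_u)\to\So(\Co_u)$ by $g(x)_i = f(x)_i/\varphi_i(f(x)_i)$. This is well defined because $f(\Co_u)\subseteq\Co_u$ forces $f(x)_i\in (C_i)_{u_i}$ so $\varphi_i(f(x)_i)>0$, while homogeneity of $\varphi_i$ yields $\varphi_i(g(x)_i)=1$, hence $g(x)\in\So(\Co_u)$. Observe that fixed points of $g$ correspond precisely to normalized eigenvectors of $f$: if $g(x^\star)=x^\star$, then $f(x^\star)_i=\lambda_i\,x^\star_i$ with $\lambda_i:=\varphi_i(f(x^\star)_i)>0$; conversely, any $y\in\So(\Co_u)$ with $f(y)_i=\mu_i y_i$ satisfies $\mu_i=\mu_i\varphi_i(y_i)=\varphi_i(f(y)_i)>0$ and $g(y)=y$. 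Thus uniqueness of $x^\star$ and of the $\lambda_i$ can be read off from uniqueness of a fixed point of $g$.

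Next, I would verify the hypotheses of Theorem~\ref{thm:fixed-point} for $g$. Each $(S_{\varphi_i},d_{C_i})$ is a complete metric space by Lemma~\ref{NBcomplete}, since $C_i$ is a normal closed cone and $\varphi_i$ is continuous and homogeneous. Moreover, the matrix $A$ remains a Lipschitz matrix for $g$: thanks to the projective invariance of the Hilbert metric, $d_{C_i}(g(x)_i,g(y)_i) = d_{C_i}(f(x)_i,f(y)_i) \leq \sum_j A_{ij}\,d_{C_j}(x_j,y_j)$ for all $x,y\in\So(\Co_u)$. Since $\rho(A)<1$, Theorem~\ref{thm:fixed-point} produces a unique fixed point $x^\star\in\So(\Co_u)$ of $g$ and gives $g^n(x^{(0)})\to x^\star$ for any $x^{(0)}\in\So(\Co_u)$. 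Since $x^{(n+1)} = g(x^{(n)}) = g^{n+1}(x^{(0)})$, this gives both the convergence of the Perron iteration \eqref{pmdef} and, via the bijection above, the eigen-equation with unique positive $\lambda_i$.

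For the rate estimate \eqref{eq:conv-bound} I would invoke the quantitative bound supplied by Theorem~\ref{thm:fixed-point}, namely $\delta(x^{(n+1)},x^\star)\preceq (I-A)^{-1}A^{n+1}\delta(g(x^{(0)}),x^{(0)})$. Multiplying on the left by any nonnegative left Perron vector $w$ with $wA=\rho(A)w$, and using $wA^{n+1}=\rho(A)^{n+1}w$ together with $w(I-A)^{-1}=(1-\rho(A))^{-1}w$, yields $\ps{w}{\delta(x^{(n+1)},x^\star)}\leq \gamma\,\rho(A)^n$ with $\gamma=\rho(A)(1-\rho(A))^{-1}\ps{w}{\delta(g(x^{(0)}),x^{(0)})}$. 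The coordinatewise bound \eqref{eq:conv-bound} then follows because $w_i\,d_{C_i}(x^{(n+1)}_i,x^\star_i)\leq \ps{w}{\delta(x^{(n+1)},x^\star)}$ by nonnegativity of $w$ and $\delta$. I do not anticipate any real obstacle here: once the correct map $g$ is identified, the theorem is essentially a repackaging of Lemma~\ref{NBcomplete} and Theorem~\ref{thm:fixed-point}; the only delicate point is the compatibility between the scale invariance of $d_{C_i}$ and the homogeneous normalization $\varphi_i$, which is precisely what ensures both that $A$ remains a Lipschitz matrix for $g$ and that the eigenvector-to-fixed-point correspondence is a bijection.
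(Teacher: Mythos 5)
Your proposal is correct and follows essentially the same route as the paper's proof: normalize $f$ to the self-map $g(x)_i = f(x)_i/\varphi_i(f(x)_i)$ on $\So(\Co_u)$, use the projective invariance of $d_{C_i}$ to see that $A$ is still a Lipschitz matrix for $g$, invoke Lemma~\ref{NBcomplete} for completeness and Theorem~\ref{thm:fixed-point} for the unique fixed point and the iteration, and then left-multiply the error bound by the left Perron vector $w$ to obtain \eqref{eq:conv-bound}. Your write-up is in fact slightly more explicit than the paper's (the fixed-point/eigenvector bijection and the identity $w(I-A)^{-1}=(1-\rho(A))^{-1}w$ are only implicit there), but the argument is the same.
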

\begin{proof}
	Consider the map $g:\So(\Co_u)\to\So(\Co_u)$ defined by $g(x)_i = f(x)_i/\varphi_i(f(x)_i)$, for $i=1,\dots, \nu$. As $d_{C_i}$ is projective for every $i$, then $A$ is a Lipshitz matrix for $g$ and, by Theorem \ref{thm:fixed-point}, the sequence $x^{(n)}$ converges to the unique $x^\star \in \So(\Co_u)$ such that $g(x^\star)=x^\star$, for any $x^{(0)}\in\So(\Co_u)$. Moreover, there exist unique coefficients $\lambda_i>0$ such that $f(x^\star)_i = \lambda_i  x_i^\star$, for all  $i=1,\dots,\nu$. Finally, if $\delta:\So(\Co_u)\times \So(\Co_u)\to \R^\nu_+$ is the cone-metric  $\delta(x,y) = \big(d_{C_1}(x_1,y_1),\dots,d_{C_\nu}(x_\nu,y_\nu)\big)$, from  Theorem \ref{thm:fixed-point} we have
	\begin{align*}
        w\,\delta(x^{(n+1)},x^\star) &\leq w A^n(I-A)^{-1}\delta(x^{(1)},x^{(0)})\\
        &= \rho(A)^n (1-\rho(A))^{-1}  \big(w\,\delta(x^{(1)},x^{(0)})\big)
    \end{align*}
    which, with $\gamma = (1-\rho(A))^{-1}\sum_i w_i d_{C_i}(x^{(1)}_i,x^{(0)}_i)$, implies the final convergence bound. 
\end{proof}
\begin{remark}
Note that when $w_i=0$ for some $i$, in the theorem above, then \eqref{eq:conv-bound} does not give information on  the convergence rate of $d_{C_i}(x_i^{(n+1)},x^\star_i)$. In fact, the bound \eqref{eq:conv-bound} does not hold in general when $A$ has no positive left-eigenvector $w$ and a weaker upper-bound holds in this case:   
Combining Young's  theorem \cite{young1980norm} with Theorem \ref{thm:fixed-point} one easily deduces that there exists $\tilde \gamma>0$ such that 
$$d_{C_i}(x_i^{(n+1)},x^\star_i)\leq  \tilde \gamma\, \binom{n}{\nu-1}\rho(A)^{n-\nu+1}$$
for all $i=1,\dots,\nu$ and all $n\geq \nu$. 
\end{remark}

A direct application of Theorems \ref{mainres} and \ref{mainPF}  eventually leads to the following 
\begin{theorem}[Multilinear Perron-Frobenius]\label{thm:multilinear_PF}
	Let $u\in \V$ be such that $u_i\in C_i\setminus \{0\}$ for all $i$ and let $f=(f_1,\dots,f_\nu)\colon \V\to \V$ be such that $f(\Co_u)\subseteq \Co_u$ and such that, for every $i=1,\ldots,\nu$, the mapping $f_i\colon \V\to V_i$ is weakly multilinear. Then Theorem \ref{mainPF} holds for $f$, with $A_{ij}$ being
	$$A_{ij}= \kappa_j(f_i) \qquad \forall i,j=1,\ldots,\nu\, ,$$
	where $\kappa_j(f_i) = \kappa_j(f_i;\, \Co_u, (C_i)_{u_i})$ is the mode-$j$ Birkhoff contraction ratio of $f_i$. In particular, if $\rho(A)<1$, then the system of nonlinear equations \eqref{eq:system_spectral_equations} has a unique solution in $\So(\Co_u)$ which can be computed with the power sequence \eqref{pmdef}. 
\end{theorem}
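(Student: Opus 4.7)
The plan is to observe that this theorem is essentially a direct corollary obtained by feeding the conclusion of Theorem \ref{mainres} into Theorem \ref{mainPF}. The only thing that needs to be checked is that the matrix $A$ with entries $A_{ij}=\kappa_j(f_i;\Co_u,(C_i)_{u_i})$ is a valid Lipschitz matrix for $f$ with respect to the coordinatewise Hilbert metric on $\Co_u$; once this is in hand, the completeness assumption needed by Theorem \ref{mainPF} is already guaranteed by Lemma \ref{NBcomplete} (since each $C_i$ is normal and closed and each $\varphi_i$ is continuous and homogeneous), and the conclusions transfer verbatim.

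First I would note that $\Co_u = (C_1)_{u_1}\times\cdots\times (C_\nu)_{u_\nu}$ is a product of cones, each $(C_i)_{u_i}$ being a cone by \eqref{defCu}. For each $i$, the map $f_i\colon \V\to V_i$ is weakly multilinear by hypothesis, and the assumption $f(\Co_u)\subseteq \Co_u$ gives $f_i(\Co_u)\subseteq (C_i)_{u_i}$. Therefore Theorem \ref{mainres} applies to $f_i$ with source $\Co_u$ and target $(C_i)_{u_i}$, yielding the componentwise inequality
\begin{equation*}
d_{C_i}(f_i(x),f_i(y))\leq \sum_{j=1}^\nu \kappa_j(f_i;\Co_u,(C_i)_{u_i})\,d_{C_j}(x_j,y_j) = \sum_{j=1}^\nu A_{ij}\,d_{C_j}(x_j,y_j)
\end{equation*}
for every $x,y\in \Co_u$. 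Stacking these inequalities over $i$ gives exactly the Lipschitz matrix condition $\delta(f(x),f(y))\preceq A\,\delta(x,y)$ required in the statement of Theorem \ref{mainPF}.

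Then, assuming $\rho(A)<1$, I would invoke Theorem \ref{mainPF} directly on the product of complete metric spaces $(S_{\varphi_i},d_{C_i})$ to obtain both (i) a unique $x^\star\in\So(\Co_u)$ and unique positive scalars $\lambda_1,\dots,\lambda_\nu$ with $f(x^\star)_i=\lambda_i x_i^\star$, which is precisely a solution of the system \eqref{eq:system_spectral_equations}, and (ii) the convergence of the normalized power iteration \eqref{pmdef} with the rate bound \eqref{eq:conv-bound}. No substantive obstacle appears: the combinatorics that made the proof of Theorem \ref{mainres} subtle (reducing weakly multilinear to multilinear, and tracking which modes are actually used) have already been absorbed there, while the fixed-point and spectral-radius arguments have been isolated in Theorem \ref{thm:fixed-point} and Theorem \ref{mainPF}. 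The only bookkeeping point worth stating explicitly is that the mode-$j$ ratios must be computed with source cone $\Co_u$ (not $\Co$) and target cone $(C_i)_{u_i}$; this is what ensures both that $f_i$ takes values in the correct cone and that the Hilbert pseudo-metric is well-defined on the relevant components.
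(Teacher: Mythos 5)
Your proposal is correct and follows essentially the same route as the paper: the authors likewise note that the $(S_{\varphi_i},d_{C_i})$ are complete by Lemma \ref{NBcomplete} and then obtain the Lipschitz matrix $A_{ij}=\kappa_j(f_i)$ by applying Theorem \ref{mainres} to each $f_i$ before invoking Theorem \ref{mainPF}. Your write-up simply spells out the stacking of the componentwise inequalities and the choice of source cone $\Co_u$ and target $(C_i)_{u_i}$, which the paper leaves implicit.
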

\begin{proof}
	As $\Co_u$ and $(C_i)_{u_i}$ are normal cones and $(S_{\varphi_i},d_{C_i})$ are complete metric spaces by Lemma \ref{NBcomplete}, the result follows by applying Theorems \ref{mainres} and \ref{mainPF} to $f_i$ for every $i=1,\ldots,\nu$.
\end{proof}
\begin{remark}\label{rmk:contractivity_of_h}
The theorem above together with Theorem \ref{mainres}  extend the Birkhoff-Hopf and the Perron-Frobeius theorems to multilinear maps and obviously reduce to those classical results when $f:\V\to \V$ is linear and $\nu=1$. However, even for linear mappings, when $\nu>1$ our theorems  improve the contraction bound given by the standard Birkhoff-Hopf theorem. This is shown by the following simple example:

Consider the finite dimensional setting of Remark \ref{rmk:positive_linear_map}:  $C=\R^n_+$, $\mathring C=\R_{++}^n$, $V=\R^n$, $\Co=C\times C$ and  $\V = \R^n\times \R^n$. Let  $f:V\to V$ be a positive linear map, i.e.  $f(\R^n_+\setminus\{0\})\subseteq \R_{++}$. We know from the Birkhoff-Hopf theorem that $f$ acts as a contraction on $C$, i.e.\ $\kappa(f;C,C)=\kappa(f) < 1$ (see also Remark \ref{rmk:positive_linear_map}). Now, define $h:\V \to \V$ as $h(x_1,x_2) = (f(x_2),f(x_1))$. Then $h$ is still linear and  $h(\Co)\subseteq \Co$. Moreover,  
$$
A = \begin{pmatrix}
0 & \kappa_2(h_1)\\
\kappa_1(h_2) & 0
\end{pmatrix} = \begin{pmatrix}
0 & \kappa(f)\\
\kappa(f) & 0
\end{pmatrix}
$$
is a Lipschitz matrix for $h$. 
Noting that $\rho(A) = \kappa(f)$,  we deduce from Theorem \ref{thm:multilinear_PF} that   $h:\V\to\V$ acts as a contraction on $\Co$. However,  it is known that $\kappa(h; \Co, \Co)=1$   (see for instance \cite[Thm.~3.10]{Seneta1981}) and therefore the classical Birkhoff-Hopf result does not reveal any contractivity of $h$. 
\end{remark}
%
%
%
%
%
%
%
%
%
%
%
\section{Application to nonlinear integral operators}\label{sec:applications}
 In this section we consider a  class  of mappings $f_i$ defined as particular nonlinear integral operators acting on cones of continuous and nonnegative functions via a positive continuous kernel function $K$.   This setting generalizes the case of positive linear integral operators $f$,  originally considered  by Hopf, and extends the formula for $\kappa(f)$ known in that case \cite{BHNB,hopf1}.   

 In particular, for this kind of maps we obtain an explicit formula that gives an upper bound on $\kappa_j(f_i)$, the mode-$j$ contraction ratio of $f_i$,  in terms of the kernel function. Due to Theorem \ref{thm:main_integral_op}, this formula is useful to address existence, uniqueness and computability of a solution to  various systems of equations including for instance the integral equations considered in \cite{Bus73} and \cite{bushell1986cayley},  the generalized Schr\"{o}dinger equation discussed in \cite{ruschendorf1998closedness} and various eigenvalue  equations for hypermatrices (or tensors) \cite{friedland2014number,tensorPF} which are connected, for example, with optimal transport, hypergraph matching, network science  and multivariate polynomial optimization \cite{benamou2015iterative,nguyen2017efficient,tudisco2017node,zhou2012nonnegative}.

 Note that our goal is not to have the most general assumptions  possible, but to illustrate how to apply our results to this particular kind of mappings. For instance, few careful adjustments should be enough to transfer the results to spaces of integrable but possibly discontinuous functions.
%
%
%
%
%
%
%
%
%
\def\cont{ \mathscr C } 
\subsection{Systems of positive integral equations}\label{sec:integrals}
Let $X_1,\dots,X_\nu$ be compact Hausdorff spaces and let $\eta_1,\dots,\eta_\nu$ be regular Borel measures on $X_1,\dots,X_\nu$, respectively, i.e.\ such that $\eta_i(U_i)>0$ for any nonempty open subset $U_i\subseteq X_i$. For $i=1,\ldots,\nu$, let $V_i =\cont(X_i, \R)$ be the Banach space of continuous and real valued functions on~$X_i$.  
For $i=1,\dots,\nu$, let $C_i\subseteq V_i$  be the cone 
$$C_i = \{x_i\in V_i :x_i(\xi_i)\geq 0\,,  \forall \xi_i\in X_i\} \,  $$
and consider $\varphi_i:\mathring C_i\to(0,\infty)$ continuous and homogeneous. Note that the interior $\mathring{C}_i$ is the set of positive valued functions $\mathring C_i=\{x_i\in C_i : x_i(\xi_i)>0\}$. Moreover note that each $x_i\in V_i$ attains its maximum and its minimum, since each $X_i$ is compact and any $x_i\in V_i$ is continuous. It follows that every $x_i\in \mathring C_i$ is comparable with the constant function $\mathbf 1(\xi_i)=1$ and we have $\mathring C_i = (C_i)_{\mathbf 1}$, i.e.\ $\mathring C_i$ is the component of $C_i$ containing $\mathbf 1$. Thus, by Lemma \ref{NBcomplete}, we deduce that $(S_{\varphi_i},d_{C_i})$ is a complete metric space, with $S_{\varphi_i} = \{x_i\in \mathring C_i  : \varphi_i(x_i)=1\}$. 
Finally, set $\V=V_1\times \cdots \times V_{\nu}$,  $\Co = C_1\times \cdots \times C_\nu$, $\mathring \Co = \mathring C_1\times \cdots \times \mathring C_\nu$ and $\So(\mathring \Co)=S_{\varphi_1}\times \cdots \times S_{\varphi_\nu}$.

Now, for any $i$, let 
$\Omega_i = X_1\times \dots \times X_{i-1}\times X_{i+1}\times \dots \times X_{\nu}$
and consider a positive continuous kernel $\kernel:X_1\times\cdots\times X_\nu\to (0,\infty)$. Given real numbers $\alpha_{ij}\in \R$, $i,j\in \{1,\dots,\nu\}$, define the integral operator $f_i:\V\to V_i$ as
\begin{equation}\label{eq:integral_operator}
f_i(x)(\xi_i)= \int_{\Omega_i}  \kernel(\xi_1,\ldots,\xi_\nu)\prod_{\substack{j=1, j \neq i}}^{\nu}x_j(\xi_j)^{\alpha_{ij}}\,\,  d\,  \eta_j(\xi_j)\, ,
\end{equation}
for every $\xi_i\in X_i$. 

In the next Theorem \ref{thm:main_integral_op} we specialize Theorems \ref{mainres} and \ref{mainPF} to this setting and, in particular,  we derive an upper bound for the mode-$j$ contraction ratio of such $f_i$, in terms of $K$ and the coefficients $\alpha_{ij}$. In practice, this formula can be used to derive conditions for the existence and uniqueness of a solution to the following system of nonlinear integral equations
\begin{equation}\label{integraleq}
	f_i(x) =\lambda_i \, x_i ^{\,\gamma_i} \qquad \forall i =1,\ldots,\nu\, ,
\end{equation}
where $\gamma_1,\dots,\gamma_\nu$ are nonzero real exponents. Particular cases of this system of equations include for example the integral equations considered in \cite{Bus73} and \cite{bushell1986cayley}, when $\gamma_i=\alpha_{ij}=1$ for all $i,j$ and  the generalized Schr\"{o}dinger equation \cite{ruschendorf1998closedness}, when $\gamma_i=-1$ and $\alpha_{ij}=1$ for all $i,j$. When the spaces $X_i$ are discrete, other examples  include for instance the higher-oder Kullback-Leibler divergence problem \cite{benamou2015iterative}, the $(\sigma,p)$ tensor eigenvalue problem \cite{tensorPF}, the optimization of multivariate polynomials \cite{zhou2012nonnegative} and the best rank-one approximation of tensors \cite{friedland2014number}. Moreover, we will briefly discuss an example discrete problem in Example \ref{ex:discrete}. 
For $j = 1,\dots,\nu$, consider the following kernel cross-ratio
\begin{equation}\label{eq:contraction_integral_kernel}
\triangle_j(\kernel)=\max_{\substack{\xi_1\in X_1,\dots, \xi_\nu\in X_\nu\\ \xi_1'\in X_1,\dots, \xi_\nu'\in X_\nu}} \frac{\kernel(\xi_1,\dots,\xi_\nu)\, \kernel(\xi_1',\dots,\xi_\nu')}{\kernel(\xi_1,\dots,\xi_j', \dots,\xi_\nu)\, \kernel(\xi_1',\dots,\xi_j,\dots,\xi_\nu')} \, .
\end{equation}
Note that, since $\kernel$ is continuous, the maximum is attained. Moreover, since $\kernel$ is positive valued, it holds $f_i(C_i\setminus\{0\})\subseteq \mathring C_i$ for any $i=1,\dots,\nu$. We have

\begin{lemma}\label{lem:formula_for_integral_operators}
Let $f_i:\V\to V_i$ be defined as in \eqref{eq:integral_operator} with $\alpha_{ij}=1$, for all $i,j=1,\dots,\nu$. Then $f_i$ is weakly multilinear and for any $j=1,\dots,\nu$, $j\neq i$, it holds 
$$
\kappa_i(f_i)=0 \qquad \text{and} \qquad \kappa_j(f_i) \leq \tanh\Big(\frac 1 4 \log \triangle_{j}(\kernel) \Big)\, 
$$
with the convention that $\tanh(\infty)=1$. 
\end{lemma}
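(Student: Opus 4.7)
The proof naturally splits into three pieces: weak multilinearity, the identity $\kappa_i(f_i)=0$, and the upper bound on $\kappa_j(f_i)$ for $j\neq i$.

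With all $\alpha_{ij}=1$, the integrand defining $f_i$ is $K(\xi_1,\dots,\xi_\nu)\prod_{j\neq i}x_j(\xi_j)$, so $f_i(x)$ depends only on $(x_j)_{j\neq i}$ and is linear in each of these arguments. Thus $f_i(x)=\tilde f_i(x_1,\dots,x_{i-1},x_{i+1},\dots,x_\nu)$ for a multilinear $\tilde f_i$, which is precisely the definition of weakly multilinear. Since $f_i|_z^i$ is then constant for every $z\in\Co$, the observation following Definition \ref{defKi} gives $\kappa_i(f_i)=0$.

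For $j\neq i$, fix $z\in\Co$ and define the continuous, nonnegative \emph{effective kernel}
$$\tilde K_z(\xi_i,\xi_j) = \int_{\prod_{k\neq i,j} X_k} K(\xi_1,\dots,\xi_\nu) \prod_{k\neq i,j} z_k(\zeta_k)\, d\eta_k(\zeta_k).$$
Then $f_i|_z^j(y)(\xi_i) = \int_{X_j} \tilde K_z(\xi_i,\xi_j)\, y(\xi_j)\, d\eta_j(\xi_j)$ is a classical positive linear integral operator from $C_j$ into $C_i$. Using $M(u/v;C_i)=\sup_{\xi_i} u(\xi_i)/v(\xi_i)$ for the cone of nonnegative continuous functions, and rewriting products of integrals with disjoint dummy variables (a continuous analogue of the calculation recalled in Remark \ref{rmk:positive_linear_map}), one obtains the Hopf-type bound
$$\diam(f_i|_z^j(C_j); C_i) \leq \log \sup_{\xi_i,\xi_i',\xi_j,\xi_j'} \frac{\tilde K_z(\xi_i,\xi_j)\,\tilde K_z(\xi_i',\xi_j')}{\tilde K_z(\xi_i,\xi_j')\,\tilde K_z(\xi_i',\xi_j)}.$$

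It remains to bound the right-hand side by $\log\triangle_j(K)$ uniformly in $z$. By the very definition of $\triangle_j(K)$, the pointwise inequality
$$K(\xi_1,\dots,\xi_\nu)\,K(\xi_1',\dots,\xi_\nu') \leq \triangle_j(K)\, K(\xi_1,\dots,\xi_j',\dots,\xi_\nu)\,K(\xi_1',\dots,\xi_j,\dots,\xi_\nu')$$
holds for every choice of tuples, where only the $j$-th entries are swapped. Multiplying by the nonnegative product $\prod_{k\neq i,j} z_k(\zeta_k) z_k(\zeta_k')$ and integrating over the dummy variables $(\zeta_k),(\zeta_k')_{k\neq i,j}$ preserves the inequality and yields $\tilde K_z(\xi_i,\xi_j)\,\tilde K_z(\xi_i',\xi_j')\leq \triangle_j(K)\,\tilde K_z(\xi_i,\xi_j')\,\tilde K_z(\xi_i',\xi_j)$. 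Combining with the previous display and taking $\sup_{z\in\Co}$, Theorem \ref{thm:linear_BH} then gives $\kappa_j(f_i)\leq \tanh(\tfrac14 \log\triangle_j(K))$. The main obstacle is the Hopf-type diameter bound for the linear integral operator; the subsequent integration step uses the hypothesis $\alpha_{ij}=1$ crucially, since the weight $\prod z_k(\zeta_k) z_k(\zeta_k')$ must be invariant under the swap of the $j$-th variable for the lifting from the pointwise inequality to the integrated one to work without picking up $z$-dependent constants.
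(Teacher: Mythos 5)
Your proof is correct and arrives at the paper's bound, but it is organized differently. The paper works directly with the cross--ratio $R(x_j,y_j)(\xi_i,\xi_i')$ written as a ratio of two integrals over $\Omega_i\times\Omega_i$ and bounds it in a single step by the supremum of the ratio of the integrands, after (implicitly) relabelling the dummy variable $\xi_j\leftrightarrow\xi_j'$ in the denominator so that the factors $x_j(\xi_j)y_j(\xi_j')$ cancel and only the kernel cross--ratio survives; this keeps the argument self-contained. You instead factor the argument through Fubini: you integrate out the variables $\zeta_k$, $k\neq i,j$, against the weights $z_k$ to obtain a bivariate effective kernel $\tilde K_z$, invoke the classical Hopf diameter formula for the positive integral operator with kernel $\tilde K_z$, and then transfer the cross--ratio bound from $K$ to $\tilde K_z$ by integrating the pointwise inequality $K(\xi)K(\xi')\leq\triangle_j(K)\,K(\dots,\xi_j',\dots)K(\dots,\xi_j,\dots)$ against the weight $\prod_{k\neq i,j}z_k(\zeta_k)z_k(\zeta_k')$. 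Both routes rest on the same two ingredients (the mediant inequality $\int f/\int g\leq\sup f/g$ and the swap of the $j$-th dummy variable), but yours makes the reduction to the classical linear theory explicit and cleanly isolates why the bound is uniform in $z$, at the price of citing Hopf's formula for continuous positive kernels as a black box where the paper reproves it inline; you should either prove that formula (it is the same mediant-plus-relabelling computation) or cite \cite{hopf1,BHNB} explicitly. One small inaccuracy in your closing remark: the swap-invariance of the weight has nothing to do with $\alpha_{ij}=1$, since the weight excludes the index $j$ altogether and is therefore invariant under the swap for arbitrary exponents; the hypothesis $\alpha_{ij}=1$ is needed earlier, to make $f_i|_z^j$ linear so that the definition of $\kappa_j$ and Theorem \ref{thm:linear_BH} apply at all.
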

\begin{proof}
For a fixed $z\in\Co$, consider the map $f_i|_z^j:V_j\to V_i$ defined, as in \eqref{deffi}. 
From \eqref{eq:integral_operator} we see that $f_i|_z^i$ is constant, which implies $\kappa_i(f_i)=0$. Now, if $j\neq i$, for any $x_j\in C_j$ we have 
$$
d_{C_i}(f_i|_z^j(x_j),f_i|_z^j(y_j)) = \log\Big( M(f_i|_z^j(x_j)/f_i|_z^j(y_j); C_i) \cdot  M(f_i|_z^j(y_j)/f_i|_z^j(x_j); C_i) \Big)\, .
$$
Moreover, since $f_i|_z^j$ is continuous on the compact domain $X_i$, we have 
$$
e^{(d_{C_i}(f_i|_z^j(x_j),f_i|_z^j(y_j)))} = \max_{\xi_i,\xi_i'\in X_i}\frac{f_i|_z^j(x_j)(\xi_i) \, f_i|_z^j(y_j)(\xi_i')}{f_i|_z^j(y_j)(\xi_i)\, f_i|_z^j(x_j)(\xi_i')} =: \max_{\xi_i,\xi_i'\in X_i}R(x_j,y_j)(\xi_i,\xi_i')\, .
$$
Thus, if we let $\xi=(\xi_1,\dots,\xi_\nu)$ and $\xi'=(\xi_1',\dots,\xi_\nu')$, by definition we get
\begin{align*}
   R(x_j,y_j)(\xi_i,\xi_i') &=\frac {\int_{\Omega_i\times\Omega_i} \kernel(\xi)x_j(\xi_j)\prod_{l\neq i,j} z_l(\xi_l)\kernel(\xi')y_j(\xi_j')\prod_{l\neq i,j} z_l(\xi_l') \, d\eta_i^\times d\eta_i^\times} {\int_{\Omega_i\times\Omega_i} \kernel(\xi)y_j(\xi_j)\prod_{l\neq i,j} z_l(\xi_l)\kernel(\xi')x_j(\xi_j')\prod_{l\neq i,j} z_l(\xi_l') \, d\eta_i^\times d\eta_i^\times} \\
   &\leq \max_{\substack{\xi_k\in X_k, k\neq i\\\xi_k'\in X_k, k\neq i}} \frac { \kernel(\xi_1,\dots,\xi_\nu)x_j(\xi_j)\, \kernel(\xi_1',\dots,\xi_\nu')y_j(\xi_j')} { \kernel(\xi_1,\dots,\xi_\nu)y_j(\xi_j)\, \kernel(\xi_1',\dots,\xi_\nu')x_j(\xi_j')}\\
   &= \max_{\substack{\xi_k\in X_k, k\neq i\\\xi_k'\in X_k, k\neq i}} \frac{\kernel(\xi_1,\dots,\xi_\nu)\, \kernel(\xi_1',\dots,\xi_\nu')}{\kernel(\xi_1,\dots,\xi_j', \dots,\xi_\nu)\, \kernel(\xi_1',\dots,\xi_j,\dots,\xi_\nu')}
\end{align*}
Putting all together we obtain $ d_{C_i}(f_i|_z^j(x_j),f_i|_z^j(y_j)) \leq \log \triangle_j(\kernel)$, for any $x_j,y_j\in C_j$. Thus $\diam(f_i|_z^j(C_j);C_i)\leq \log \triangle_{j}(\kernel)$ for any $z\in \Co$,  and this concludes the proof. 
\end{proof}

Note that, since $\kernel$ is positive valued, if the coefficients $\alpha_{ij}$ are all nonnegative, then the map $f=(f_1,\dots,f_\nu):\Co\to\Co$, with $f_i$ as in \eqref{eq:integral_operator}, is  order-preserving and  multi-homogeneous,  with homogeneity matrix $B_{ij}=\alpha_{ij}$, $i,j=1,\dots,\nu$. As observed in Remark \ref{rem:multi-homogeneous}, this implies that $B$ is a Lipschitz matrix for $f$. However, similarly to the multilinear case, next Theorem \ref{thm:main_integral_op} exhibits another Lipschitz matrix $A$ for $f$, with $\rho(A)\leq \rho(B)$.

As before, let $\mathring \Co = \mathring C_1\times \cdots\times \mathring C_\nu$. We have 
\begin{theorem}\label{thm:main_integral_op}
Let $f_i$ be defined in terms of the kernel $\kernel$ as in \eqref{eq:integral_operator} and consider the  map $f=(f_1,\dots,f_\nu):\V\to\V$.  
Then Theorem \ref{mainPF} holds for $f$, with  $A_{ij}$ being 
$$
A_{ii}=0, \qquad A_{ij} = |\alpha_{ij}| \cdot \tanh\Big(\frac 1 4 \log \triangle_{j}(\kernel) \Big), \qquad \forall i,j=1,\dots,\nu\, . 
$$

Moreover, if $\gamma_i\neq 0$ for all $i=1,\dots,\nu$, let $B$ be the matrix with coefficients  $B_{ij}=A_{ij}/|\gamma_i|$. Then, if $\rho(B)<1$,  the system of nonlinear integral equations \eqref{integraleq} has a unique solution in $\So(\mathring \Co)$ which can be computed with the power sequence \eqref{pmdef}, applied to the scaled mappings $\tilde f(x)_i = f(x)_i^{1/\gamma_i}$, $i=1,\dots,\nu$.
\end{theorem}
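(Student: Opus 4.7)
The plan is to verify that the matrix $A$ in the statement is a Lipschitz matrix for $f$ with respect to the Hilbert cone-metric, after which Theorem~\ref{mainPF} applies directly to $f$, and then to convert the eigen-type system \eqref{integraleq} into a genuine fixed-point equation for the rescaled map $\tilde f(x)_i=f(x)_i^{1/\gamma_i}$, to which Theorem~\ref{mainPF} applies through the matrix $B=(A_{ij}/|\gamma_i|)$.

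To bound $d_{C_i}(f_i(x),f_i(y))$ I will mimic the telescoping argument of Theorem~\ref{mainres}: fix $x,y\in\mathring{\Co}$, insert the intermediate points $z[k]=(y_1,\dots,y_{k-1},x_k,\dots,x_\nu)$, and estimate each slice $d_{C_i}(f_i|^k_{z[k]}(x_k),f_i|^k_{z[k]}(y_k))$ separately. For $k=i$ the formula \eqref{eq:integral_operator} is constant in $x_i$, giving $A_{ii}=0$. For $k\neq i$ I substitute $u_k=x_k^{\alpha_{ik}}$, turning the slice into a positive linear integral operator
\[
\tilde f_{i,z,k}(u_k)(\xi_i)=\int_{X_k}\tilde K_{z,k}(\xi_i,\xi_k)\,u_k(\xi_k)\,d\eta_k(\xi_k),
\]
where $\tilde K_{z,k}(\xi_i,\xi_k)=\int \kernel(\xi_1,\dots,\xi_\nu)\,\prod_{l\neq i,k} z_l(\xi_l)^{\alpha_{il}}\,d\eta_{l\neq i,k}$ is a continuous positive kernel on $X_i\times X_k$. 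The same pointwise estimate used in the proof of Lemma~\ref{lem:formula_for_integral_operators} yields the uniform-in-$z$ bound
\[
\diam\bigl(\tilde f_{i,z,k}(\mathring{C}_k);\mathring{C}_i\bigr)\leq \log\triangle_k(\kernel),
\]
because the weights $z_l^{\alpha_{il}}$ cancel in the Hopf cross-ratio of $\tilde K_{z,k}$. Combining this with Theorem~\ref{thm:linear_BH} applied to the linear map $\tilde f_{i,z,k}$, together with the projective scaling identity $d_{C_k}(x_k^{\alpha},y_k^{\alpha})=|\alpha|\,d_{C_k}(x_k,y_k)$ (which follows from a case split on the sign of $\alpha$ in the definition of $M(\cdot/\cdot;C_k)$), I obtain
\[
d_{C_i}\bigl(f_i|^k_z(x_k),f_i|^k_z(y_k)\bigr)\leq |\alpha_{ik}|\tanh\bigl(\tfrac14\log\triangle_k(\kernel)\bigr)\,d_{C_k}(x_k,y_k)=A_{ik}\,d_{C_k}(x_k,y_k),
\]
so summing over $k$ confirms that $A$ is a Lipschitz matrix. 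Since each $\mathring{C}_i=(C_i)_{\mathbf 1}$ is normal and closed and each $(S_{\varphi_i},d_{C_i})$ is complete by Lemma~\ref{NBcomplete}, the hypotheses of Theorem~\ref{mainPF} are met.

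The second assertion then follows by observing that \eqref{integraleq} is equivalent to the fixed-point equation $\tilde f(x)_i=\mu_i x_i$ with $\mu_i=\lambda_i^{1/\gamma_i}$, and invoking the scaling identity once more to obtain $d_{C_i}(\tilde f(x)_i,\tilde f(y)_i)=|\gamma_i|^{-1}d_{C_i}(f(x)_i,f(y)_i)$, so that $B=(A_{ij}/|\gamma_i|)$ is a Lipschitz matrix for $\tilde f$. Under $\rho(B)<1$, Theorem~\ref{mainPF} applied to $\tilde f$ produces the unique $x^\star\in\So(\mathring{\Co})$ and multipliers $\mu_i>0$ with $\tilde f(x^\star)_i=\mu_i x^\star_i$, computed via the power sequence \eqref{pmdef}, which translates back to the unique solution of \eqref{integraleq} with $\lambda_i=\mu_i^{\gamma_i}$. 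The main technical point I expect to require care is precisely the uniform-in-$z$ diameter bound on $\tilde f_{i,z,k}$: one must verify that the auxiliary weights $z_l^{\alpha_{il}}$ for $l\neq i,k$ drop out of the Hopf cross-ratio of $\tilde K_{z,k}$ because the same integration variables $\xi_l$ enter symmetrically in the numerator and denominator of the ratio, so that the bound reduces to the pure kernel cross-ratio $\triangle_k(\kernel)$.
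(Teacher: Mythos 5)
Your proposal is correct and follows essentially the same route as the paper: the paper factors $f_i=\tilde f_i\circ g_i$ with $g_i(x)_j=x_j^{\alpha_{ij}}$, applies Lemma~\ref{lem:formula_for_integral_operators} (whose proof is exactly your cross-ratio cancellation of the fixed $z$-factors) together with Theorem~\ref{mainres} to $\tilde f_i$, and then uses the same scaling identity $d_{C_j}(x_j^{\alpha},y_j^{\alpha})=|\alpha|\,d_{C_j}(x_j,y_j)$ and the $1/\gamma_i$ rescaling for the second part. Inlining the powers into the telescoping estimate rather than factoring through $g_i$ is only a cosmetic difference.
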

\begin{proof}
For $i\in\{1,\dots,\nu\}$ consider the map $g_i:\V\to\V$ defined  by $g_i(x)_j(\xi_j) = x_j(\xi_j)^{\alpha_{ij}}$, for all $\xi_j\in X_j$ and all $j=1,\dots,\nu$. Note that $g_i(\mathring \Co)\subseteq \mathring \Co$ and that $f_i = \tilde f_i\circ g_i $, where $\tilde f_i$ is the weakly multilinear mapping defined as in \eqref{eq:integral_operator} but with $\alpha_{ij}=1$, for all $j=1,\dots,\nu$. Then, combining Lemma \ref{lem:formula_for_integral_operators} with Theorem \ref{mainres}, for any $x,y\in \V$ we have
\begin{align*}
d_{C_i}(f_i(x),f_i(y)) &= d_{C_i}(\tilde f_i\circ g_i (x), \tilde f_i\circ g_i(y)) \leq \sum_{j=1}^\nu \kappa_j(\tilde f_i)d_{C_j}(g_i(x)_j, g_i(y)_j)\\
&=\sum_{j=1}^\nu |\alpha_{ij}|\kappa_j(\tilde f_i) \, d_{C_j}(x_j,y_j) \leq \sum_{j=1}^\nu A_{ij}\, d_{C_j}(x_j,y_j)\, ,
\end{align*}
which shows that the assumptions of Theorem \ref{mainPF} hold for $f$ and $A_{ij}$. Moreover, for any $x,y\in \V$ we have 
$d_{C_i}(f_i(x)^{1/\gamma_i},f_i(y)^{1/\gamma_i})\leq \sum_{j=1}^\nu B_{ij}\, d_{C_j}(x_j,y_j)$. 
By taking the $1/\gamma_i$ power on both sides of \eqref{integraleq}, such inequality, together with Theorem \ref{mainPF}, implies the thesis.
\end{proof}
Our final result shows that, for the particular type of integral operator we are considering and a large range of choices of the real parameters $\alpha_{ij}$, the  system of nonlinear equations \eqref{integraleq} admits a solution with a global scaling coefficient, i.e.\  $\lambda=\lambda_i$, for all $i=1,\dots,\nu$. This is shown in the following
\begin{corollary}
Let $f$ and $B$ be defined as in Theorem \ref{thm:main_integral_op} and assume that $\alpha_{ij} = \alpha_{j}$, $\gamma_j\neq 0$  and $\alpha_j+\gamma_j\neq 0$, for all $i,j=1,\dots,\nu$. If $\rho(B)<1$, then there exist a unique  $\lambda >0$ and a unique $u\in \mathring \Co$ with $\int_{X_i}u_i^{\alpha_i+\gamma_i}\, d\eta_i=1$  such that $f_i(u)=\lambda\, u_i^{\gamma_i}$ holds for any $i=1,\dots,\nu$.
\end{corollary}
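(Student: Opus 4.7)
The plan is to apply Theorem~\ref{thm:main_integral_op} with a choice of normalizing functionals $\varphi_i$ that directly encode the constraint $\int_{X_i}u_i^{\alpha_i+\gamma_i}\,d\eta_i=1$, and then exploit the fact that $\alpha_{ij}=\alpha_j$ is independent of $i$ to collapse the scaling coefficients $\lambda_1,\dots,\lambda_\nu$ into a single value~$\lambda$.

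First, I would set $\varphi_i\colon\mathring C_i\to(0,\infty)$ to be
$$\varphi_i(x_i)=\Big(\int_{X_i}x_i(\xi_i)^{\alpha_i+\gamma_i}\,d\eta_i(\xi_i)\Big)^{1/(\alpha_i+\gamma_i)}.$$
This is well-defined, continuous and homogeneous of degree~$1$ on $\mathring C_i$: each $x_i\in\mathring C_i$ is continuous and strictly positive on the compact set $X_i$, and $\alpha_i+\gamma_i\neq 0$ by hypothesis. The corresponding unit slice is precisely $S_{\varphi_i}=\{x_i\in\mathring C_i:\int_{X_i}x_i^{\alpha_i+\gamma_i}\,d\eta_i=1\}$. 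Since $\rho(B)<1$, Theorem~\ref{thm:main_integral_op} then yields a unique $u\in\So(\mathring\Co)$ and unique positive scalars $\lambda_1,\dots,\lambda_\nu$ satisfying $f_i(u)=\lambda_i u_i^{\gamma_i}$ for every~$i$.

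The crux of the argument will be to show that all $\lambda_i$ coincide. The idea is to multiply the identity $f_i(u)=\lambda_i u_i^{\gamma_i}$ by $u_i(\xi_i)^{\alpha_i}$ and integrate over $X_i$ against $\eta_i$. Applying the normalization $\int u_i^{\alpha_i+\gamma_i}\,d\eta_i=1$ on the left, and Fubini together with the hypothesis $\alpha_{ij}=\alpha_j$ on the right, one obtains
$$\lambda_i=\int_{X_1\times\cdots\times X_\nu} K(\xi_1,\dots,\xi_\nu)\prod_{j=1}^\nu u_j(\xi_j)^{\alpha_j}\prod_{j=1}^\nu d\eta_j(\xi_j).$$
The right-hand side is a fully symmetric integral that does not depend on $i$, so all $\lambda_i$ equal the common value $\lambda:=\int K\prod_j u_j^{\alpha_j}\,d\eta>0$. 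Uniqueness of the pair $(\lambda,u)$ then follows from the uniqueness clause of Theorem~\ref{thm:main_integral_op}, since any pair $(\lambda,u)$ satisfying the hypotheses of the corollary is in particular a solution of system~\eqref{integraleq} with $\lambda_i\equiv\lambda$.

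The main obstacle is essentially just verifying that $\varphi_i$ is an admissible normalizer in the sense required by Theorem~\ref{thm:main_integral_op}; once that is in place, the integration-against-$u_i^{\alpha_i}$ trick is forced upon us by the $i$-independent structure of $\alpha_{ij}$, which makes the kernel integral $\int K\prod_j u_j^{\alpha_j}\,d\eta$ symmetric in~$i$.
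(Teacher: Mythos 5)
Your proposal is correct and follows essentially the same route as the paper's own proof: the same normalizers $\varphi_i(x_i)=\bigl(\int_{X_i}x_i^{\alpha_i+\gamma_i}\,d\eta_i\bigr)^{1/(\alpha_i+\gamma_i)}$, an appeal to Theorem \ref{thm:main_integral_op}, and the same trick of multiplying $f_i(u)=\lambda_i u_i^{\gamma_i}$ by $u_i^{\alpha_i}$ and integrating to identify $\lambda_i$ with the $i$-independent quantity $\int K\prod_j u_j^{\alpha_j}\,d\eta$. Your Fubini computation merely makes explicit what the paper states in one line, so nothing further is needed.
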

\begin{proof}
Since $\alpha_i+\gamma_i \neq  0$ by assumption, we can consider the continuous and homogeneous map  $\varphi_i(x_i) = (\int_{X_i}x_i^{\alpha_i+\gamma_i}d\eta_i)^{\frac 1 {\alpha_i+\gamma_i}}$.~Then, by Theorem \ref{thm:main_integral_op}, there exists a unique $u\in  \So(\mathring \Co)$ such that $f_i(u)(\xi_i)=\lambda_i u_i(\xi_i)^{\gamma_i}$ for all $\xi_i\in X_i$ and $i=1,\dots,\nu$. Multiplying by $u_i(\xi_i)^{\alpha_i}$ and integrating over $X_i$ both sides of such equations,  we~get
\begin{equation}\label{eq:lambdas}
\int_{X_i}f_i(u)(\xi_i)u_i^{\alpha_i}(\xi_i)\, d\eta_i(\xi_i) = \lambda_i \int_{X_i}u_i(\xi_i)^{\alpha_i+\gamma_i}\, d\eta_i(\xi_i)= \lambda_i\, ,
\end{equation}
where the rightmost identity holds because $u_i\in S_{\varphi_i} = \{x_i:\varphi_i(x_i)=1\}$. Noting that, by the assumption on the coefficients $\alpha_{ij}$, the left hand side of \eqref{eq:lambdas} does not depend on $i$ and is positive, we conclude.
\end{proof}
We conclude with an example application of Theorem \ref{thm:main_integral_op} to the problem of computing the norm of the Hilbert tensor.

\begin{example}\label{ex:discrete}
We consider here an example operator of the type \eqref{eq:integral_operator}, corresponding to the discrete kernel known as Hilbert tensor. In this example the $X_i$ are discrete finite spaces and, for simplicity, we further assume here  $X_1=\dots=X_\nu = \{1, \dots, n\}$. The general case can be analyzed in an analogous way.  In this setting we have $V_1 = \dots = V_\nu =  \R^{n}$,  $C_1=\dots =C_\nu = \R^{n}_+$ and the interior  $\mathring C_i=\R^{n}_{++}$ is the set of vectors with $n$ positive components. The Hilbert tensor is defined by 
$$
H(i_1, \dots, i_\nu)  = \frac{1}{i_1+\dots+i_\nu -\nu +1}
$$
for $i_1, \dots, i_\nu \in \{1, \dots, n\}$. In \cite{song2014infinite}, the following Hilbert--type inequality has been proved for $H$
$$
\sum_{i_1,\dots,i_\nu} \frac{|x(i_1)|\cdots|x(i_\nu)|}{i_1+\dots+i_\nu-\nu+1}\leq n^{\nu/2} \sin(\pi/n) \Big(\sum_{i=1}^n x(i)^2\Big)^{\nu/2} \, ,
$$
which boils down to the famous Hilbert inequality for linear operators when $\nu=2$ (see e.g.\ \cite{frazer1946note}). It is interesting to observe that such inequality provides an upper bound on the tensor norm
$$
\|H\|_{p_1, \dots, p_\nu} = \max\Big\{\, \big|\!\!\!\sum_{i_1,\dots, i_\nu} H(i_1,\dots,i_\nu)x_{1}(i_1)\cdots x_\nu(i_\nu)\big| \, : \|x_1\|_{p_1}=\cdots =\|x_\nu\|_{p_\nu} =1\Big\} 
$$
where $p_1, \dots, p_\nu \geq 1$. In fact, since $H$ is symmetric, when $p_1=\dots = p_\nu = 2$ the following tight inequality holds for all $x\in \R^n$ (see e.g.\ \cite{tensorPF})
$$
\sum_{i_1,\dots, i_\nu} H(i_1,\dots,i_\nu)x(i_1)\cdots x(i_\nu) \leq  \|H\|_{2,\dots,2} \, \, \|x\|_2^\nu  \, .
$$
However, unlike the matrix case, a good approximation of  $\|H\|_{2,\dots,2}$ is not always computable. New conditions that allow us to compute $\|H\|_{2,\dots,2}$ follow as a consequence of Theorem \ref{thm:main_integral_op}. In fact, if we look at the critical point conditions for $\|H\|_{p_1, \dots, p_\nu}$ we observe that a maximizer has to be a solution of the system of equations $f_i(x) = \lambda_i x_i$, $i=1, \dots, \nu$, where $f_i$ is of the form \eqref{eq:integral_operator}, with $K=H$ and $\alpha_{ij}=\frac{1}{p_i-1}$. Moreover, since $H$ is positive, any maximizer $x$ must be nonnegative, as otherwise we could replace $x$ with its absolute value $|x|$ and further increase the value of the objective function. As Theorem \ref{thm:main_integral_op} gives us conditions on the existence of a unique positive solution, this must be the global maximizer i.e.\ the one realizing the norm $\|H\|_{p_1,\dots,p_\nu}$ In addition, the particular structure of the  Hilbert kernel $H$ allows us to compute the cross-ratios \eqref{eq:contraction_integral_kernel} exactly. In fact, as $H$ is symmetric, we have $\triangle_1(H)=\dots=\triangle_\nu(H)=\triangle(H)$ and
$$
\triangle(H) = \frac{n^2 (\nu-1) + n(2-\nu)}{n\nu -\nu + 1} \, .
$$
This shows that, for example, when $p_1=\dots=p_\nu=2$, the spectral radius of the matrix $B$ in Theorem \ref{thm:main_integral_op} is smaller than one when $\nu=4$ and $n\leq 11$ or $\nu=3$ and $n\leq 13$. Thus, for these choices of $\nu$ and $n$, using the power sequence \eqref{pmdef}, we are guaranteed we can compute $\|H\|_{2,\dots,2}$ to an arbitrary precision. Figure \ref{fig:norms} compares the actual value of $\|H\|_{2,\dots,2}$ with the Hilbert--type bound $n^{\nu/2}\sin(\pi/n)$, for $\nu =2,3,4$, showing that the quality of the bound degrades when the order $\nu$ increases.
\begin{figure}[t]
\includegraphics[width=.9\textwidth]{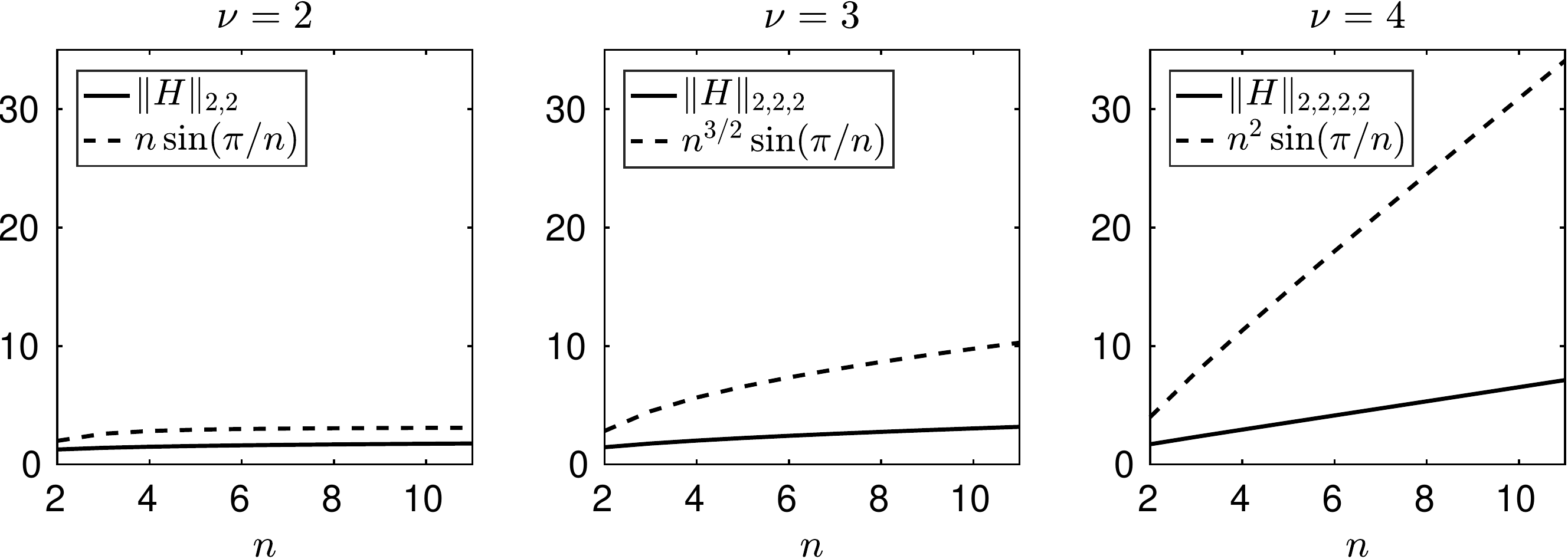}
\caption{This figure compares $\|H\|_{2,\dots,2}$ with its upper bound $n^{\nu/2}\sin(\pi/n)$ and shows that, while the bound is relatively tight in the linear case $\nu=2$, its precision degrades  when  $\nu$ increases.}\label{fig:norms}
\end{figure}

\end{example}

\providecommand{\bysame}{\leavevmode\hbox to3em{\hrulefill}\thinspace}
\providecommand{\MR}{\relax\ifhmode\unskip\space\fi MR }
\providecommand{\MRhref}[2]{%
  \href{http://www.ams.org/mathscinet-getitem?mr=#1}{#2}
}
\providecommand{\href}[2]{#2}

\end{document}